\newcommand{\IC}{\mathbb{C}}
\newcommand{\IR}{\mathbb{R}}
\newcommand{\question}[1]{\leavevmode{\marginpar{\tiny%
$\hbox to 0mm{\hspace*{-0.5mm}$\leftarrow$\hss}%
\vcenter{\vrule depth 0.1mm height 0.1mm width \the\marginparwidth}%
\hbox to 0mm{\hss$\rightarrow$\hspace*{-0.5mm}}$\\\relax\raggedright #1}}}
\newcommand{\IN}{\mathbb{N}}
\newcommand{\IZ}{\mathbb{Z}}
\newcommand{\pa}{\slash\slash}
\newcommand{\trasp}{/\!/}
\theoremstyle{plain}            
\newtheorem{theorem}{theorem}[section]
\newtheorem{Lemma}[theorem]{Lemma}
\newtheorem{Corollary}[theorem]{Corollary}
\newtheorem{Theorem}[theorem]{Theorem}
\newtheorem{Proposition}[theorem]{Proposition}
\theoremstyle{definition}       
\newtheorem{Definition}[theorem]{Definition}
\newtheorem{Remark}[theorem]{Remark}
\newtheorem{Notation}[theorem]{Notation}
\begin{document}

\begin{titlepage}
\vspace{.5cm}

{\Large \bf Odd characteristic classes in entire cyclic homology and equivariant loop space homology}\\
\begin{center}
\vskip 10mm
{\large Sergio Cacciatori$^{1}$ and Batu G\"uneysu$^{2}$} \\
\end{center}
{\small
$^1$ Dipartimento di Scienza e Alta Tecnologia, Universit\`a dell'Insubria, via Valleggio 11,
22100, Como, Italy and INFN - Sezione di Milano, via Celoria 16, 20133, Milano, Italy \\
{\tt email: sergio.cacciatori@uninsubria.it}

\

$^2$ Mathematisches Institut, Universität Bonn, Endenicher Allee 60, 53115 Bonn, Germany\\ {\tt email: gueneysu@math.uni-bonn.de}
}
\vskip 10mm
\begin{center}
{\bf Abstract}
\end{center}
{\small Given a compact manifold $M$ and a smooth map $g:M\to U(l\times l;\IC)$ from $M$ to the Lie group of unitary $l\times l$ matrices with entries in $\IC$, we construct a Chern character $\mathrm{Ch}^-(g)$ which lives in the odd part of the equivariant (entire) cyclic Chen-normalized cyclic complex $\mathscr{N}_{\epsilon}(\Omega_{\mathbb{T}}(M\times \mathbb{T}))$ of $M$, and which is mapped to the odd Bismut-Chern character under the equivariant Chen integral map. It is also shown that the assignment $g\mapsto  \mathrm{Ch}^-(g)$ induces a well-defined group homomorphism from the $K^{-1}$ theory of $M$ to the odd homology group of $\mathscr{N}_{\epsilon}(\Omega_{\mathbb{T}}(M\times \mathbb{T}))$. }
 \vspace{2mm} \vfill \hrule width 3.cm





\end{titlepage}



Let $M$ be a closed Riemannian spin manifold with its Clifford multiplication 
$$
c:\Omega(M)\longrightarrow \mathrm{End}(S)
$$
and its Dirac operator $D$ acting in $L^2(M,S)$, and given $g\in C^{\infty}(M,U(l\times l;\IC))$ let $D_g$ denote the twisted Dirac operator
$$
D_g:= g^{-1}Dg = D+ c(g^{-1}d g),
$$
considered to be acting on $L^2(M,S\otimes \IC^l)$. Then with 
$$
D_{g,s}:=(1-s)D+s D_g, \quad s\in [0,1],
$$
the odd dimensional variant of Atiyah-Singer's 'index' theorem states that if $M$ is odd dimensional, then \cite{getzler} 
\begin{align}\label{psnyny}
 \frac{1}{2\pi }\int^1_0 \mathrm{Tr} \left[\dot{D}_{g,s} \exp\left(-  D_{g,s}^2\right)\right] d s = \int_M \hat{A}(M)\wedge \mathrm{ch}^-(g),
\end{align}
where $\mathrm{ch}^-(g)\in \Omega^-(M)$ denotes the odd Chern character. The left hand side of (\ref{psnyny}) is precisely the spectral flow $\mathrm{sf}(D,D_g)$ \cite{getzler}. Furthermore, on the RHS of this formula, the odd Chern character can be obtained integration along the fiber of $M\times I \to M$ of the even Chern character of an appropriately chosen connection on $M\times I$ \cite{getzler}. In fact, this formula can be proved by noting the LHS admits an infinite dimensional version of such an even/odd periodicity \cite{bf1,bf2} in terms of the eta form.\\
Being motivated by the considerations of Atiyah and Bismut \cite{atiyah,bismut} for the even-dimensional case, one finds that another very elegant and geometric, however purely formal, way to prove (\ref{psnyny}) is to assume the existence of a Duistermaat-Heckmann localization formula for the smooth loop space $LM$: indeed, the spin structure on $M$ induces an orientation on $LM$ \cite{atiyah} and the path integral formalism entails the elegant, however mathematically ill-defined, formula (the even-dimensional variant of this formula is well-known \cite{bismut} and the odd-dimensional case can be proved similarly \cite{shu})
\begin{align}\label{shus}
 \frac{1}{2\pi }\int^1_0 \mathrm{Tr} \left[\dot{D}_{g,s} \exp\left(-  D_{g,s}^2\right)\right] d s= \int_{LM} \exp\left( -\beta\right)\wedge  \mathrm{Bch}^-(g),
\end{align}
where $\beta=\beta_0+\beta_2 \in \Omega^+(LM)$ denotes the even differential form defined on smooth vector fields $X,Y$ on $LM$ by
$$
\beta_0(X):=\int^1_0  | X_s|^2  d s ,\quad \beta_2(X,Y):= \int^1_0  \left( \nabla X_s/ \nabla s ,Y_s\right)  d s,
$$
and where $\mathrm{Bch}^-(g)\in \Omega^-(M)$ denotes the odd Bismut-Chern character \cite{bismut122, wilson}. Now both differential forms $\exp( -\beta)$ and $ \mathrm{Bch}^-(g)$ are equivariantly closed (cf. Section \ref{posmmaa2} for the definition of the degree $-1$ differential $P$),
$$
(d+P)\exp( -\beta)=0=(d+P) \mathrm{Bch}^-(g)
$$
and so is their product. As the fixed point set of the $\mathbb{T}$-action on $LM$ given by rotating every loop is precisely $M\subset LM$ , a hypothetical Duistermaat-Heckmann localization formula immediately gives
$$
\int_{LM}\exp( -\beta)\wedge  \mathrm{Bch}^-(g) = \int_{M}  \hat{A}(M)\wedge \exp(-\beta)|_M\wedge   \mathrm{Bch}^-(g)|_{M}, 
$$
as $\hat{A}(M)$ is the inverse of the (appropriately renormalized) Euler class of the normal bundle of $M\subset LM$. This proves (\ref{psnyny}), as clearly $\exp(-\beta)|_M=1$ and by construction $\mathrm{Bch}^-(g)|_{M}= \mathrm{ch}^-(g)$.\\
A direct implementation of the above arguments is not possible, as the right hand side of formula (\ref{shus}) is not well-defined for various reasons. For example, there exists no volume measure on $LM$, while smooth loops have Wiener measure zero, and, on the other hand, it is notoriously difficult to produce a variant of the super complex $(\Omega(LM), d+P)$ if one replaces $LM$ with the smooth Banach manifold of \emph{continuous loops}. Nevertheless and strikingly, the above formal manipulations lead to the powerful machinery of hypoelliptic Dirac and Laplace operators, as is explained in \cite{bismut122} and the references therein.\\
However, a possible way out of these problems has been proposed by Getzler, Jones and Petrack (GJP) \cite{gjp} \cite{getzler2}. In this approach, the idea is to take as model for $\Omega(LM)$ the space of equivariant Chen integrals: these are given by the image of a morphism of super complexes (cf. Section \ref{posmmaa2} below for the relevant definitions)
$$
\rho:\big( \mathscr{N}_{\epsilon}(\Omega_{\mathbb{T}}(M\times \mathbb{T})), b+B\big)\longrightarrow
\big(\widehat{\Omega}(LM), d+P\big).
$$
Above, $\mathscr{N}_{\epsilon}(\Omega_{\mathbb{T}}(M\times \mathbb{T}))$ denotes the Chen-normalized entire cyclic (or Connes) complex of the locally convex unital DGA $\Omega_{\mathbb{T}}(M\times \mathbb{T})$, and $\widehat{\Omega}(LM)$ denotes a completed space of smooth differential forms on $LM$. Now the GJP-program for infinite dimensional localization is as follows: here it is conjectured that the composition
$$
\int_{LM} \exp( -\beta)\wedge \rho(\cdot):  \mathscr{N}_{\epsilon}(\Omega_{\mathbb{T}}(M\times \mathbb{T}))\longrightarrow \IC,
$$
\emph{is} a mathematically well-defined continuous functional, and that
\begin{itemize}
\item $\int_{LM} \exp( -\beta)\wedge \rho(\cdot)$ is odd (as $LM$ is formally odd-dimensional if $M$ is so \cite{bismut122}) and co-closed, meaning that it vanishes on the exact elements of $\mathscr{N}_{\epsilon}(\Omega_{\mathbb{T}}(M\times \mathbb{T}))$ ,
\item if $w\in \mathscr{N}_{\epsilon}(\Omega_{\mathbb{T}}(M\times \mathbb{T}))$ is closed, then one has the 'Duistermaat-Heckmann localization formula'
$$
\int_{LM} \exp( -\beta)\wedge \rho(w)  =\int_M \hat{A}(TM) \wedge \rho(w)|_M.
$$

\end{itemize}

If in addition one could canonically construct an element 
$$
\mathrm{Ch}^-(g)\in\mathscr{N}^-_{\epsilon}(\Omega_{\mathbb{T}}(M\times \mathbb{T}))
$$
 such that 
\begin{itemize}
\item[i)] $\mathrm{Ch}^-(g)$ is closed
\item[ii)] $\rho(\mathrm{Ch}^-(g))=\mathrm{Bch}^-(g)$
\item[iii)] $ \rho(\mathrm{Ch}^-(g))|_M=\mathrm{ch}^-(g)$,
\end{itemize}
then from the above observations we would immediately obtain a proof of (\ref{psnyny}) within the GJP-program for infinite dimensional localization. Note that in the even dimensional case such a Chern character has been constructed as an even cycle in $\mathscr{N}_{\epsilon}(\Omega_{\mathbb{T}}(M\times \mathbb{T}))$ in \cite{gjp}.\vspace{2mm}

The aim of this paper is precisely to construct a canonically given element 
$$
\mathrm{Ch}^-(g)\in\mathscr{N}^-_{\epsilon}(\Omega_{\mathbb{T}}(M\times \mathbb{T}))
$$
satisfying the above properties i), ii), iii). In fact, our main results Theorem \ref{main} and Theorem \ref{main2} below construct $\mathrm{Ch}^-(g)$ for $M$ a compact manifold (possibly with boundary), which satisfies i) and iii) and in addition ii) if $M$ is closed (so that $LM$ is a well-defined smooth Fr\'{e}chet manifold). We also show in Theorem \ref{main} that the assignment $g\mapsto \mathrm{Ch}^-(g)$ induces a well-defined group homomorphism 
$$
\mathsf{K}^{-1}(M) \longrightarrow  \mathscr{N}(\Omega_{\mathbb{T}}(M\times \mathbb{T})).
$$
Finally, taking for granted that the even variant of $\mathrm{Ch}^-(g)$ and $\mathrm{BCh}^-(g)$ have been previously defined \cite{gjp,bismut}, we establish an even/odd periodicity, relating these constructions to ours, showing another analogy to (\ref{psnyny}).\vspace{5mm}

{\bf Acknowledgements:} The authors would like to thank Jean-Michel Bismut, Markus Pflaum and Shu Shen for their discussions. We are very grateful to Matthias Ludewig for sharing his construction of the equivariant Chen integral map with us.

\section{Cyclic bar complex of a differential graded algebra (DGA)}

In the sequel, we understand all our linear spaces to be over $\IC$. Assume we are given a unital DGA $\Omega$, that is,
\begin{itemize}
	\item $\Omega$ is a unital algebra 
	\item $\Omega=\bigoplus^{\infty}_{j=-\infty} \Omega^j$ is graded into subspaces $\Omega^j\subset \Omega$ such that $\Omega^i\Omega^j\subset \Omega^{i+j}$ for all $i,j\in\IZ$, there is a degree $+1$ differential $d:\Omega\to \Omega$ which satisfies the graded Leibnitz rule.
\end{itemize}

Note that the space $\underline{\Omega}:=\Omega/(\IC\cdot \mathbf{1})$ is a graded linear space (but not canonically an algebra), and the space of cyclic chains $\mathscr{C}(\Omega)$ is defined as
$$
\mathscr{C}(\Omega):= \bigoplus^{\infty}_{n=0}\Omega \otimes  \underline{\Omega}^{ \otimes  n}.
$$
We give $\Omega \otimes  \underline{\Omega}^{ \otimes  n}$ the grading
$$
\Omega \otimes  \underline{\Omega}^{ \otimes  n}=\bigoplus^{\infty}_{j=0} \bigoplus_{j_0+\cdots+ j_n=j-n}\Omega^{j_0} \otimes \underline{\Omega}^{j_1}\otimes\cdots \otimes \underline{\Omega}^{j_n},
$$
which induces a linear map
$$
\Gamma:\mathscr{C}(\Omega)\longrightarrow \mathscr{C}(\Omega),\quad \Gamma (w_0,w_1,\dots):=\big((-1)^{\mathrm{deg}(w_0)}w_0,(-1)^{\mathrm{deg}(w_1)}w_1,\dots\big).
$$
Since we have $\Gamma^2=1$, we can define a superstructure $\mathscr{C}(\Omega)=\mathscr{C}^{+}(\Omega)\oplus \mathscr{C}^{-}(\Omega)$ by setting
$$
\mathscr{C}^{\pm}(\Omega):= \{w\in \mathscr{C}(\Omega): \Gamma w= \pm w\}.
$$

The following notation will be useful in the sequel:

\begin{Notation} Given $a\in \Omega \otimes  \underline{\Omega}^{ \otimes  n}$ we define  
$$
\left\langle a\right\rangle:=(\dots, a,\dots)\in \mathscr{C}(\Omega)
$$
to be the cochain which has $a$ in its $n$-th slot and 0 anywhere else.
\end{Notation}

We have the Hochschild map of the DGA-category 
$$
b:\mathscr{C}(\Omega)\longrightarrow \mathscr{C}(\Omega)
$$
defined on $\Omega^{j_0} \otimes \underline{\Omega}^{j_1}\otimes\cdots \otimes \underline{\Omega}^{j_n}$ by
{
\begin{align*}
 b  \left\langle  \omega_0 \otimes \cdots \otimes \omega_n \right\rangle=& \left\langle d \omega_0\otimes \cdots \otimes \omega_i \otimes \cdots \otimes \omega_n\right\rangle\\
& -\sum_{i=1}^n (-1)^{j_0+\ldots +j_{i-1} -i+1}\left\langle  \omega_0\otimes \cdots \otimes d\omega_i \otimes \cdots \otimes \omega_n\right\rangle \\
& -\sum_{i=0}^{n-1} (-1)^{j_0+\ldots +j_i-i} \left\langle \omega_0\otimes \cdots \otimes \omega_i \omega_{i+1} \otimes \cdots \otimes \omega_n \right\rangle\\
& +(-1)^{(j_n-1)(j_0+\ldots +j_{n-1}-n+1)} \left\langle (\omega_n \omega_0) \otimes \omega_1 \otimes \cdots \otimes \omega_{n-1}\right\rangle,
\end{align*}
}
and Connes' operator 
$$
B:\mathscr{C}(\Omega)\longrightarrow \mathscr{C}(\Omega),
$$
which is defined on $\Omega^{j_0} \otimes \underline{\Omega}^{j_1}\otimes\cdots \otimes \underline{\Omega}^{j_n}$ by 
\begin{align*}
 B \left\langle \omega_0 \otimes \cdots \otimes \omega_n \right\rangle=\sum_{i=0}^n (-1)^{(r_{i-1}+1)(r_n-r_{i-1})} \left\langle 1\otimes \omega_i \otimes \cdots \otimes \omega_n \otimes \omega_0 \otimes \cdots \otimes \omega_{i-1}\right\rangle,
\end{align*}
with $r_l=j_0+\cdots +j_l-l$. It is a well-known fact that one has
\begin{align*}
b^2=0,\quad B^2=0, \quad bB+bB=0,\quad \Gamma   b=-\Gamma b, \quad \Gamma B=-\Gamma B.
\end{align*}

We get the super complex
\begin{align}\label{cycl}
\mathscr{C}^{+}(\Omega)\xrightarrow{b+B}\mathscr{C}^{-}(\Omega)\xrightarrow{b+B}\mathscr{C}^{+}(\Omega). 
\end{align}

%

The subspace $\mathscr{D}(\Omega)\subset \mathscr{C}(\Omega)$ is defined to be the linear span of all $w\in \mathscr{C}(\Omega)$ that satisfy one of the following relations:\\
$\bullet$ for all $n\in\IN$ there exists $1\leq r\leq n$, $f\in \Omega^0$, $\omega_0\in \Omega$, { $\omega_s\in \underline{\Omega}$, $s\neq r$,} with
\begin{align} \label{titty0}
\left\langle w_n\right\rangle=\left\langle \omega_{0}\otimes \cdots\otimes \omega_{r-1}\otimes f\otimes\omega_{r+1}\otimes \cdots\otimes \omega_{n}\right\rangle.
\end{align}
$\bullet$ for all $n\in\IN$ there exists $1\leq r\leq n$, $f\in \Omega^0$, $\omega_0\in \Omega$, { $\omega_s\in \underline{\Omega}$, $s\neq r$,} with
\begin{align}\label{titty}
&\left\langle \omega_{0}\otimes \cdots\otimes \omega_{r-1} f\otimes\omega_{r+1}\otimes \cdots\otimes \omega_{n}\right\rangle+\left\langle \omega_{0}\otimes \cdots\otimes \omega_{r-1}\otimes df\otimes\omega_{r+1}\otimes \cdots\otimes \omega_{n}\right\rangle\cr
&\quad-\left\langle \omega_{0}\otimes \cdots\otimes \omega_{r-1}\otimes f\omega_{r+1}\otimes \cdots\otimes \omega_{n}\right\rangle.
\end{align}
The maps $\Gamma,b,B$ map $\mathscr{D}(\Omega)$ to itself, so that with
$$
 \mathscr{D}^{\pm}(\Omega):= \{w\in \mathscr{D}(\Omega): \Gamma w= \pm w\},
$$
there is a super complex
$$
 \mathscr{D}^{+}(\Omega)\xrightarrow{b+B}\mathscr{D}^{-}(\Omega)\xrightarrow{b+B}\mathscr{D}^{+}(\Omega).
$$
With $\mathscr{N}^{\pm}(\Omega):= \mathscr{C}^{\pm}(\Omega) / \mathscr{D}^{\pm}(\Omega)$, the induced quotient complex 
$$
  \mathscr{N}^{+}(\Omega)\xrightarrow{b+B}\mathscr{N}^{-}(\Omega)\xrightarrow{b+B}\mathscr{N}^{+}(\Omega). 
$$


Whenever there is no danger of confusion, the equivalence class of $w\in  \mathscr{C}(\Omega) $ in $\mathscr{N}(\Omega)$ is denoted with the same symbol again.

\section{Entire cyclic homology of a locally convex unital DGA}

We recall that a topological vector space is called locally convex, if the topology is induced by a family of seminorms, noting that then the topology is equivalent to the topology induced by all continuous seminorms.

\begin{Definition} By a locally convex unital DGA we understand a unital DGA $\Omega$ which is also a locally convex Hausdorff space, such that
\begin{itemize}

\item the differential is continuous, e.g., for every continuous seminorm $\varepsilon$ on $\Omega$ there exists a continuous seminorm $\varepsilon'$ on $\Omega$ such that 
\begin{align}\label{conti}
\epsilon( d\omega )\leq \epsilon'(\omega)\quad\text{ for all $\omega \in \Omega$}
\end{align}
\item the multiplication is jointly continuous, e.g., for every continuous seminorm $\varepsilon$ on $\Omega$ there exists a continuous seminorm $\varepsilon'$ on $\Omega$ such that 
\begin{align}\label{conti2}
\varepsilon( \omega_1\omega_2 )\leq\varepsilon'(\omega_1)\varepsilon'(\omega_2)\quad\text{ for all $\omega_1,\omega_2 \in \Omega$.}
\end{align}

\end{itemize}
\end{Definition}

The space $\underline{\Omega}$ becomes a graded locally convex Hausdorff space, and we equip the algebraic tensor product $\Omega\otimes \underline{\Omega}^{\otimes n}$ with the induced family of $\pi$-tensor seminorms, that is, 
$$
\varepsilon_n(\omega)= \inf \left\{\sum_{\alpha}\varepsilon(\omega_0^{(1)}) \cdots \varepsilon(\omega_n^{(\alpha)}): \omega=\sum_{\alpha}\omega_0^{(\alpha)}\otimes\cdots\otimes\omega_n^{(\alpha)}\right\},
$$
where the sum runs through all representations of $\omega$ as a finite sum of elementary tensors, and where $\epsilon$ is a continuous seminorm on $\Omega$.

\begin{Definition} The space of \emph{entire cyclic chains} $\mathscr{C}_{\epsilon}(\Omega)$ is defined to be the closure of $\mathscr{C}(\Omega)$ with respect to the seminorms 
$$
\kappa_{\varepsilon}(w):=\sum^{\infty}_{n=0} \frac{\varepsilon_n(w_n)}{ \sqrt{n!}},
$$
where $\varepsilon$ is an arbitrary continuous seminorm on $\Omega$.
\end{Definition}

The space $\mathscr{C}_{\epsilon}(\Omega)$ is a complete locally convex Hausdorff space. Note that the above family of seminorms is equivalent to the familiy of seminorms 
$$
\kappa_{\varepsilon,l}(w):=\sum^{\infty}_{n=0} \frac{\varepsilon_n(w_n)l^n}{ \sqrt{n!}} <\infty,
$$
where $\varepsilon$ is an arbitrary continuous seminorm on $\Omega$ and $l\in\IN$, as $l\varepsilon$ is again a continuous seminorm and the $\varepsilon_n$'s are cross semi-norms. Thus, our growth conditions are modelled on the entire growth conditions for ungraded Banach algebras by Getzler/Szenes from \cite{gs}. \\
Before stating the next auxiliary result, we recall that a continous linar map from a locally convex Hausdorff space $\mathscr{X}$ to a complete locally convex Hausdorff space $\mathscr{Y}$ can be uniquely extended to a continuous linear map $\hat{\mathscr{X}}\to \mathscr{Y}$, noting that the completion $\hat{\mathscr{X}}$ is Hausdorff again. This can be proved precisely as for normed spaces.

\begin{Lemma} The operators $\Gamma, b,B$ map $\mathscr{C}(\Omega)$ continuously to itself, in particular, with
$$
 \mathscr{C}^{\pm}_{\epsilon}(\Omega):= \{w\in \mathscr{C}_{\epsilon}(\Omega): \Gamma w= \pm w\},
$$
there is a well-defined super complex
\begin{align}\label{entcycl}
 \mathscr{C}^{+}_{\epsilon}(\Omega)\xrightarrow{b+B}\mathscr{C}^{-}_{\epsilon}(\Omega)\xrightarrow{b+B}\mathscr{C}^{+}_{\epsilon}(\Omega).
\end{align}
\end{Lemma}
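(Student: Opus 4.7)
The plan is to reduce everything to elementary-tensor estimates on each slot $\Omega\otimes\underline{\Omega}^{\otimes n}$ and then sum the resulting geometric–factorial series, using the stability of the seminorm family $\{\kappa_{\varepsilon,l}\}$ under the substitutions $\varepsilon\mapsto\varepsilon'$, $l\mapsto 2l$, etc. Since the completion $\mathscr{C}_{\epsilon}(\Omega)$ is Hausdorff and $\mathscr{C}(\Omega)$ is dense in it, continuity of $\Gamma, b, B$ on $\mathscr{C}(\Omega)$ will automatically extend them uniquely to continuous operators on the completion.

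First, I would dispatch $\Gamma$: by definition $\varepsilon_n((\Gamma w)_n)=\varepsilon_n(w_n)$ for every continuous seminorm $\varepsilon$, so $\kappa_\varepsilon(\Gamma w)=\kappa_\varepsilon(w)$ and $\Gamma$ is an isometry. Next, for $b$, I would bound the $\varepsilon_n$-seminorm of $(bw)_n$ by grouping its summands according to their origin: the $(n+1)$ differential terms come from $w_n$, and using continuity of $d$ in the form (\ref{conti}) together with the cross-seminorm property of $\varepsilon_n$ they satisfy a bound $\varepsilon_n(\text{differential term}) \le \varepsilon'_n(w_n)$ for an appropriate continuous seminorm $\varepsilon'$; the $(n+2)$ product terms (including the cyclic wraparound) come from $w_{n+1}$, and using continuity of multiplication (\ref{conti2}) they satisfy $\varepsilon_n(\text{product term})\le\varepsilon''_{n+1}(w_{n+1})$. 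Combining, and replacing $\varepsilon',\varepsilon''$ by a common dominating continuous seminorm $\tilde\varepsilon$, one obtains
\[
\kappa_\varepsilon(bw)\le\sum_{n=0}^{\infty}\frac{(n+1)\tilde\varepsilon_n(w_n)+(n+2)\tilde\varepsilon_{n+1}(w_{n+1})}{\sqrt{n!}}.
\]
Reindexing the second sum via $m=n+1$ converts $1/\sqrt{n!}$ into $\sqrt{m}/\sqrt{m!}$, and since $n+1$ and $(m+1)\sqrt{m}$ are both dominated by $C\cdot 2^n$ and $C\cdot 2^m$ respectively, the right hand side is bounded by $C\,\kappa_{\tilde\varepsilon,2}(w)$, which is a continuous seminorm by the remark following the definition of $\mathscr{C}_{\epsilon}(\Omega)$. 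Hence $b$ is continuous.

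For $B$, the estimate is cleaner: $(Bw)_n$ is a sum of $n$ terms built from $w_{n-1}$ by prepending a $\mathbf{1}$ and performing a cyclic rotation into $\Omega\otimes\underline{\Omega}^{\otimes n}$. Using the cross-seminorm property together with the inequality $\bar\varepsilon(p(\omega))\le\varepsilon(\omega)$ for the quotient seminorm on $\underline{\Omega}$, each term satisfies $\varepsilon_n(\cdot)\le\varepsilon(\mathbf{1})\,\varepsilon_{n-1}(w_{n-1})$, whence $\kappa_\varepsilon(Bw)\le\varepsilon(\mathbf{1})\sum_m\sqrt{m+1}\,\varepsilon_m(w_m)/\sqrt{m!}\le\varepsilon(\mathbf{1})\,\kappa_{\varepsilon,2}(w)$. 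Together with the analogous argument for $b$, this gives continuous extensions of $\Gamma,b,B$ to $\mathscr{C}_{\epsilon}(\Omega)$.

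Finally, I would deduce the super complex as follows. Since $\Gamma$ is continuous and $\Gamma^2=1$ holds on the dense subspace and therefore on the completion, the closed subspaces $\mathscr{C}^{\pm}_{\epsilon}(\Omega)=\ker(\Gamma\mp 1)$ give a topological direct sum decomposition $\mathscr{C}_{\epsilon}(\Omega)=\mathscr{C}^{+}_{\epsilon}(\Omega)\oplus\mathscr{C}^{-}_{\epsilon}(\Omega)$. The identities $b^2=B^2=0$, $bB+Bb=0$, $\Gamma b=-b\Gamma$ and $\Gamma B=-B\Gamma$, valid on $\mathscr{C}(\Omega)$, extend by continuity to $\mathscr{C}_{\epsilon}(\Omega)$, so $b+B$ anticommutes with $\Gamma$ and squares to zero; this gives the super complex (\ref{entcycl}). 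The main technical obstacle is really just the bookkeeping for $b$, where the two different arities of its summands force the two-term estimate above and one must verify that both pieces are absorbed by a single seminorm of the form $\kappa_{\tilde\varepsilon,l}$; everything else reduces to the density of $\mathscr{C}(\Omega)$ in its completion.
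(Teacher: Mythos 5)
Your proposal is correct and follows essentially the same route as the paper: $\Gamma$ is an isometry, the summands of $(bw)_n$ are split according to whether they arise from $w_n$ (via continuity of $d$) or from $w_{n+1}$ (via joint continuity of multiplication), the term counts and the $\sqrt{m}$ from reindexing the factorial are absorbed using $n+1\leq 2^n$ into a seminorm of the form $\kappa_{\tilde\varepsilon,l}$, and $B$ is handled analogously. The paper leaves these estimates as "easily checked" with exactly the hint $n+1\leq 2^n$; you have simply written out the bookkeeping it omits, together with the (also omitted but implicit) continuity extension of the algebraic identities to the completion.
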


\begin{proof} Let $\varepsilon$ be an arbitrary continuous seminorm on $\Omega$. Clearly, one has $\kappa_{\varepsilon}( \Gamma w)\leq  \kappa_{\varepsilon}( w)$ for all $w\in \mathscr{C}(\Omega)$. \\ 
Pick continuous seminorms $\varepsilon',\varepsilon''$ on $\Omega$ such that for all $\omega \in \Omega$ one has $\varepsilon(d\omega)\leq  \varepsilon''(\omega)$ and such that for all $\omega_1,\omega_2 \in \Omega$ one has $\varepsilon( \omega_1\omega_2)\leq \varepsilon'(\omega_1)\varepsilon'(\omega_2)$. Using $n+1\leq 2^n$ it is then easily checked that
\begin{align*}
\kappa_{\varepsilon}( b w)\leq   C\max(\kappa_{\varepsilon'},\kappa_{\varepsilon''})( w)\quad\text{ for all $w\in \mathscr{C}(\Omega)$. } 
\end{align*}
Likewise, it follows immediately that $\kappa_{\varepsilon} (B w)\leq  C\kappa_{\varepsilon}(w)$ for all $w\in \mathscr{C}(\Omega)$. 
\end{proof}

Defining the subspace $\mathscr{D}_{\epsilon}(\Omega)\subset \mathscr{C}_{\epsilon}(\Omega)$ as the closure of $\mathscr{D}(\Omega)$, it follows automatically that the maps
$\Gamma, b,B$ map $\mathscr{D}(\Omega)$ continuously to itself, too, producing with 
$$
\mathscr{N}^{\pm}_{\epsilon}(\Omega):= \mathscr{C}^{\pm}_{\epsilon}(\Omega) / \mathscr{D}^{\pm}_{\epsilon}(\Omega) 
$$
the quotient complex
\begin{align}\label{ass}
  \mathscr{N}^{+}_{\epsilon}(\Omega)\xrightarrow{b+B}\mathscr{N}^{-}_{\epsilon}(\Omega)\xrightarrow{b+B}\mathscr{N}^{+}_{\epsilon}(\Omega) .
\end{align}

Finally we can give:

\begin{Definition} The complex (\ref{entcycl}) is called the (reduced) \emph{entire cyclic complex} of $\Omega$ and its homology groups are denoted with $\mathsf{HC}^{\pm}_{\epsilon}(\Omega)$. Likewise, the complex (\ref{ass}) is called the (reduced) \emph{Chen-normalized entire cyclic complex} of $\Omega$ and its homology groups are denoted with $\mathsf{HN}^{\pm}_{\epsilon}(\Omega)$.
\end{Definition}

Above, 'reduced' refers to the fact that we work with $\Omega\otimes \underline{\Omega}^{\otimes n}$ rather than $\Omega^{\otimes (n+1)}$, which leads to a simpler formula for the Connes differential $B$.

\section{The unital locally convex DGA $\Omega_{\mathbb{T}}(N\times \mathbb{T})$}\label{posmmaa}

Assume $N$ is a manifold (possibly with boundary) and denote with $\mathbb{T}$ the $1$-sphere. We denote by $\Omega_{\mathbb{T}}(N\times \mathbb{T})$ the smooth $\mathbb{T}$-invariant differential forms on $N\times \mathbb{T}$, where $\mathbb{T}$ acts trivially on $N$ and by rotation on itself. Every element of  $\Omega_{\mathbb{T}}(N\times \mathbb{T})$ can be uniquely written in the form {$\alpha+\vartheta_\mathbb{T}\wedge\beta$} for some $\alpha,\beta\in \Omega(N)$, where $ \vartheta_\mathbb{T} $ denotes the canonical $1$-form on $\mathbb{T}$. We turn $\Omega_{\mathbb{T}}(N\times \mathbb{T})$ into a unital algebra by means of $\Omega_{\mathbb{T}}(N\times \mathbb{T})\subset  \Omega(N\times \mathbb{T})$, and give $\Omega_{\mathbb{T}}(N\times \mathbb{T})$ the grading
$$
{\alpha+\vartheta_\mathbb{T} \wedge \beta}\in \Omega^j_{\mathbb{T}}(N\times \mathbb{T}) \quad\Longleftrightarrow \quad \alpha\in \Omega^j(N), \beta\in \Omega^{j+1}(N).
$$
With $\partial_{\mathbb{T} }$ the canonical vector field on $\mathbb{T}$, we have the differential { $d_{\mathbb T}=d+\iota_{\partial_{\mathbb{T}}}$ defined by
$$
d_{\mathbb T}(\alpha+\vartheta_\mathbb{T} \wedge \beta) =d \alpha + \beta-   \vartheta_\mathbb{T} \wedge d\beta, \quad\text{ if $\alpha+ \vartheta_\mathbb{T}\wedge \beta$ is homogeneous,}
$$}
finally turning $\Omega_{\mathbb{T}}(N\times \mathbb{T})$ into a unital DGA.

\begin{Remark}\label{waldi} Given a manifold $X$ (possibly with boundary), the wedge product and the de Rham differential is continuous with respect to the canonical locally convex structure on $\Omega(X)$ \cite{waldmann}. In addition, if $B$ is a vector field on $X$ then the contraction
$$
\iota_B:\Omega(X)\longrightarrow \Omega(X)
$$ 
is continuous, and if $Y$ is another manifold (possibly with boundary) and if $\Psi:X\to Y$ is a smooth map, then the pullback map 
$$
\Psi^*: \Omega(Y)\longrightarrow \Omega(X)
$$ 
is continuous \cite{waldmann}. 
\end{Remark}

For every continuous seminorm $\varepsilon$ on $\Omega(N)$ we get a seminorm $\varepsilon^{\mathbb{T}}$ on $\Omega_{\mathbb{T}}(N\times \mathbb{T})$ by setting
$$
\varepsilon^{\mathbb{T}}(\alpha+\vartheta_\mathbb{T} \wedge \beta):=\varepsilon(\alpha)+\varepsilon(\beta)
$$
 In view of the formula $d_{\mathbb{T}}$, the space $\Omega_{\mathbb{T}}(N\times \mathbb{T})$ then becomes a locally convex unital DGA (by remark \ref{waldi}) in terms of the $\varepsilon^{\mathbb{T}}$'s. As a consequence, we get the super complexes

\begin{align}\label{ass33}
 &\mathscr{C}^{+}(\Omega_{\mathbb{T}}(N\times \mathbb{T}))\xrightarrow{b+B}\mathscr{C}^{-}(\Omega_{\mathbb{T}}(N\times \mathbb{T}))\xrightarrow{b+B}\mathscr{C}^{+}(\Omega_{\mathbb{T}}(N\times \mathbb{T})),\\\label{ass2}
&\mathscr{N}^{+}(\Omega_{\mathbb{T}}(N\times \mathbb{T}))\xrightarrow{b+B} \mathscr{N}^{-}(\Omega_{\mathbb{T}}(N\times \mathbb{T}))\xrightarrow{b+B} \mathscr{N}^{+}(\Omega_{\mathbb{T}}(N\times \mathbb{T})),\\\label{pna0}
&\mathscr{C}^{+}_{\epsilon}(\Omega_{\mathbb{T}}(N\times \mathbb{T}))\xrightarrow{b+B}\mathscr{C}^{-}_{\epsilon}(\Omega_{\mathbb{T}}(N\times \mathbb{T}))\xrightarrow{b+B}\mathscr{C}^{+}(\Omega_{\mathbb{T}}(N\times \mathbb{T})),\\\label{pna}
& \mathscr{N}_{\epsilon}^{+}(\Omega_{\mathbb{T}}(N\times \mathbb{T}))\xrightarrow{b+B} \mathscr{N}^{-}_{\epsilon}(\Omega_{\mathbb{T}}(N\times \mathbb{T}))\xrightarrow{b+B} \mathscr{N}^{+}_{\epsilon}(\Omega_{\mathbb{T}}(N\times \mathbb{T})).
\end{align}

\section{Equivariant Chen integrals}\label{posmmaa2}

Let us consider a compact manifold $N$ without boundary, and the space $L N$ of smooth loops $\gamma:\mathbb{T}\to N$, where in the sequel we read $\mathbb{T}$ as $\mathbb{T}=[0,1]/\sim$. This becomes an infinite dimensional Fr\'{e}chet manifold which is locally modelled on the Fr\'{e}chet space $L\IR^{\dim N}$ of smooth loops $\mathbb{T}\to \IR^{\dim N}$. Then $LN$ carries a natural smooth $\mathbb{T}$-action, given by rotating each loop, and the fixed point set of this action is precisely $N\subset LN$, embedded as constant loops. Given $\gamma \in LN$ the tangent space $T_{\gamma} LN$ is given by linear space of smooth vector fields on $N$ along $\gamma$, that is,
$$
T_{\gamma} (LN) = \big\{X\in C^{\infty}(\mathbb{T},N): X(t)\in T_{\gamma(t)}N \text{ for all $t\in \mathbb{T}$}\big\},
$$
and the generator of the $\mathbb{T}$-action on $LN$ is the vector field $\gamma\mapsto \dot{\gamma}$ on $LN$. Let $\iota$ denote the contraction with respect to the latter vector field. In the sequel, we understand 
$$
\Omega(LN):=\bigoplus^{\infty}_{k=0}\Omega^k(LM).
$$
For fixed $s\in \mathbb{T}$ one has the diffeomorphism 
$$
\phi_s:LN\longrightarrow LN, \quad\gamma \longmapsto\gamma(s+\cdot) 
$$
induced by the $\mathbb{T}$-action, and one gets an induced operator 
$$
P:\Omega(LN)\longrightarrow  \Omega(LN), \quad\text{defined on $\Omega^k(LN)$ by $P\alpha:=\int^1_0\phi^{*}_s\iota \alpha\ d s$.}
$$
Then $P$ becomes a degree $-1$ derivation. In addition, there is the usual exterior derivative
$$
d:\Omega(LN)\longrightarrow  \Omega(LN),
$$
a degree $+1$ derivation. Taking only odd/even degree forms, one gets the superstructure $\Omega=\Omega^{+}(LN)\oplus \Omega^-(LN)$, and we get the super complex 
\begin{align}\label{penis}
  \Omega^{+}(LN)\xrightarrow{\>\>d+P\>\>}\Omega^{-}(LN)\xrightarrow{\>\>d+P\>\>} \Omega^{+}(LN) ,
\end{align}
called the \emph{equivariant de Rham complex of $LN$}. This complex does not carry much information, as the differential forms of interest, like the Bismut-Chern character below, are actually elements of 
$$
\prod^{\infty}_{k=0}\Omega^k(LN),\quad\text{rather than}\quad \Omega(LN)=\bigoplus^{\infty}_{k=0}\Omega^k(LN).
$$
Thus we are going to 'complete' $\Omega(LN)$ in some way. To this end, following Chen's approach \cite{chen} of constructing a smooth structure on $LN$ in terms of plots, we consider smooth maps $f:X\to LN $, where $X$ is a finite dimensional manifold (without boundary). Given a continuous seminorm $\varepsilon$ on $\Omega(X)$ we get an induced seminorm
{$$
\varepsilon_f(\omega):=\varepsilon(f^*\omega)\quad\text{on $\Omega(LN)$}.
$$}
The locally convex topology induced by the $\epsilon_f$'s is Hausdorff and we define $\widehat{\Omega}(LN)$ to be the completion of $\Omega(LN)$ with respect to this locally convex topology. The maps $d$, $P$ and the grading operator become continuous maps $\Omega(LN)\to \Omega(LN)$: indeed, the continuity of the grading map is trivial. The continuity of $d$ follows from 
$$
\varepsilon_f(d\omega)= \varepsilon(d [f^*\omega])\leq \varepsilon'(f^*\omega)=\varepsilon'_f(\omega)
$$
for some continuous seminorm $\varepsilon'$ on $\Omega(X)$, where we have used the continuity of $d:\Omega(X)\to \Omega(X)$. Finally, the continuity of $P$ follows easily from the continuity of $\iota$, which in turn follows from writing
$$
\varepsilon_f(\iota\omega)=\varepsilon(f^* [\iota \omega])= \varepsilon( r^* \iota_{\partial_{\mathbb{T}}}  \hat{f}^*j^* [ \omega])\leq \varepsilon'_{j\circ \hat{f}}(  \omega)
$$
for some continuous seminorm $\varepsilon'$ on $\Omega(X\times \mathbb{T})$, where 
$$
r:X\longrightarrow  X\times \mathbb{T},\quad  j:N\longrightarrow LN
$$
are the canonical embeddings, and 
$$
\hat{f}:X\times \mathbb{T}\longrightarrow N
$$ 
the map induced by $f:X\to LN$, and where we have used Remark \ref{waldi} (the continuity of $r^* \iota_{\partial_{\mathbb{T}}}$, which implies the existence of $\varepsilon'$).\\
We end up with the super complex 
\begin{align}\label{penis}
  \widehat{\Omega}^{+}(LN)\xrightarrow{\>\>d+P\>\>}\widehat{\Omega}^{-}(LN)\xrightarrow{\>\>d+P\>\>} \widehat{\Omega}^{+}(LN),
\end{align}
called the \emph{completed equivariant de Rham complex of $LN$}. The corresponding homology groups are denoted by $\widehat{\mathsf{H}}^\pm_\mathbb{T}(LN)$.\vspace{1mm}

Given $t\in \mathbb{T}$ and $\alpha\in \Omega^k(N)$ one denotes with $\alpha(t)\in \Omega^k(LN)$ the form obtained by pulling $\alpha$ back with respect to the evaluation map $\gamma \mapsto \gamma(t)$. With this notation at hand, one has the \emph{equivariant Chen integral} map  
$$
\rho:\mathscr{C}(\Omega_{\mathbb{T}}(N\times \mathbb{T}))\longrightarrow
\Omega(LN),
$$
which is defined by{
\begin{align*}
&\rho\left\langle (\alpha_0+ \vartheta_\mathbb{T}\wedge \beta_0) \otimes \dots \otimes(\alpha_{n}+\vartheta_\mathbb{T}\wedge \beta_{n})  \right\rangle\\
&:=
\int_0^1 ds \phi^*_s \int_{\Delta_n}\!\!\alpha_0(0)\wedge (\iota\alpha_1(t_1)-\beta_1(t_1) )\wedge\cdots\wedge  (\iota\alpha_n(t_{n}) -\beta_n(t_{n})  ) \ d t_1\cdots d  t_{n},
\end{align*}
where
$$
\Delta_n=\{0\leq t_1\leq \dots\leq t_{ n}\leq 1\}\subset \IR^n
$$
denotes the standard $n$-simplex. We will also write
\begin{align*}
&\rho\left\langle (\alpha_0+ \vartheta_\mathbb{T}\wedge \beta_0) \otimes \dots \otimes(\alpha_{n}+\vartheta_\mathbb{T}\wedge \beta_{n})  \right\rangle\\
&=\int_0^1 ds \phi^*_s  \tilde\rho\left\langle (\alpha_0+ \vartheta_\mathbb{T}\wedge \beta_0) \otimes \dots \otimes(\alpha_{n}+\vartheta_\mathbb{T}\wedge \beta_{n})  \right\rangle.
\end{align*}
}

We collect the essential properties of $\rho$ in the following proposition:

\begin{Proposition} The map $\rho$ is a continuous morphism of super complexes 
\begin{align}\label{awayy}
 \rho: \mathscr{C}(\Omega_{\mathbb{T}}(N\times \mathbb{T}))\longrightarrow
\Omega(LN),
\end{align}
which in turn descends to a continuous map of super complexes
\begin{align}\label{awayy2}
 \rho : \mathscr{N}(\Omega_{\mathbb{T}}(N\times \mathbb{T}))\longrightarrow
\Omega(LN).
\end{align}
In particular, by density, we obtain the continuous maps of super complexes
$$
\rho : \mathscr{C}_{\epsilon}(\Omega_{\mathbb{T}}(N\times \mathbb{T}))\longrightarrow
\widehat{\Omega}(LN),\quad \rho : \mathscr{N}_{\epsilon}(\Omega_{\mathbb{T}}(N\times \mathbb{T}))\longrightarrow
\widehat{\Omega}(LN).
$$
\end{Proposition}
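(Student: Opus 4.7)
The proposition has four pieces to establish: $\rho$ respects the $\mathbb{Z}/2$-grading, $\rho$ is a chain map for $b+B$ vs.\ $d+P$, $\rho$ is continuous, and $\rho$ descends to the Chen-normalized quotient; the statements about $\mathscr{C}_{\epsilon}$ and $\mathscr{N}_{\epsilon}$ then follow by density. My proof would proceed in this order.

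The grading check is bookkeeping. Each factor $\omega_i=\alpha_i+\vartheta_\mathbb{T}\wedge\beta_i\in \Omega^{j_i}_{\mathbb{T}}(N\times\mathbb{T})$ contributes degree $j_i-1$ to the integrand of $\rho\langle\omega_0\otimes\cdots\otimes\omega_n\rangle$ (either via $\iota\alpha_i(t_i)$ or via $\beta_i(t_i)$, both of which drop one degree), except for $\omega_0$ which contributes $j_0$, and the simplex integration and $s$-averaging do not change the form-degree on $LN$; hence the total $LN$-degree is $\sum_i j_i - n$, matching the parity on the chain side.

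The chain-map identity $\rho\circ(b+B)=(d+P)\circ\rho$ is the computational core. On a generator $\langle\omega_0\otimes\cdots\otimes\omega_n\rangle$ I would expand $\omega_i=\alpha_i+\vartheta_\mathbb{T}\wedge\beta_i$, use $d_{\mathbb{T}}=d+\iota_{\partial_{\mathbb{T}}}$, and write $d\bigl(\tilde\rho\langle\omega_0\otimes\cdots\otimes\omega_n\rangle\bigr)$ as a sum of an interior derivative (producing the terms $\langle\cdots\otimes d_{\mathbb{T}}\omega_i\otimes\cdots\rangle$ in $b$) plus boundary terms arising from the faces $t_i=t_{i+1}$ and $t_n=1$ of $\Delta_n$. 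The interior faces produce precisely the product terms $\langle\cdots\otimes\omega_i\omega_{i+1}\otimes\cdots\rangle$ of $b$ (the sign bookkeeping matches the exponents $r_l=j_0+\cdots+j_l-l$ after accounting for the Koszul rule in moving $d$ past $\vartheta_{\mathbb{T}}\wedge$ and for the transition $\iota\alpha_i\leftrightarrow\beta_i$ induced by $\iota_{\partial_{\mathbb{T}}}$). The face $t_n=1$, combined with the averaging $\int_0^1 ds\,\phi_s^*$, produces both the cyclic term $\langle(\omega_n\omega_0)\otimes\omega_1\otimes\cdots\rangle$ of $b$ (after reindexing via $\phi_s^*$) and, after isolating the component containing no explicit $\omega_0$ slot, the image of the Connes operator $B$: the cyclic symmetrization built into $B$ is exactly what the $\phi_s^*\iota$ averaging produces on the $\rho$-side, so $\rho\circ B=P\circ \rho$. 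This is the step where the delicate sign combinatorics of $r_l$ must be checked carefully and is, in my view, the main obstacle.

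For continuity of (\ref{awayy}), fix a plot $f:X\to LN$ with induced $\hat f:X\times\mathbb{T}\to N$, and a continuous seminorm $\varepsilon$ on $\Omega(X)$. Writing $f^*\rho\langle\omega_0\otimes\cdots\otimes\omega_n\rangle$ as an iterated integral over $\Delta_n\subset\IR^n$ of pullbacks by $\hat f$, and using the continuity of pullback, wedge product, and $\iota_{\partial_{\mathbb{T}}}$ from Remark \ref{waldi} together with $\mathrm{vol}(\Delta_n)=1/n!$, one obtains continuous seminorms $\varepsilon'$ on $\Omega(N)$ and a constant $C$ with
\begin{align*}
\varepsilon_f\bigl(\rho\langle\omega_0\otimes\cdots\otimes\omega_n\rangle\bigr)\ \leq\ \frac{C^{n+1}}{n!}\,(\varepsilon')^{\mathbb{T}}_n(\omega_0\otimes\cdots\otimes\omega_n).
\end{align*}
Since $1/n!\leq 1/\sqrt{n!}$ and $C^n$ is absorbed by passing to the equivalent seminorm $\kappa_{(\varepsilon')^{\mathbb{T}},C}$, this yields $\varepsilon_f(\rho(w))\leq \kappa_{(\varepsilon')^{\mathbb{T}},C}(w)$, proving continuity of (\ref{awayy}) and, by density, of the extension to $\mathscr{C}_\epsilon$ landing in the complete space $\widehat{\Omega}(LN)$. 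For the descent to $\mathscr{N}(\Omega_\mathbb{T}(N\times\mathbb{T}))$, I would show $\rho$ vanishes on both types of generators of $\mathscr{D}$: for generators of type (\ref{titty0}), $\mathbb{T}$-invariance of $f\in\Omega^0$ forces $\beta_r=0$ and $\alpha_r=f$, so the factor $\iota\alpha_r(t_r)-\beta_r(t_r)=0$ (if $r\geq 1$) kills the integrand; for generators of type (\ref{titty}), the three terms combine by the Leibniz rule for $d$ applied inside the simplex integral (integrating by parts in $t_r$) to give zero after using $d_\mathbb{T} f = df$ (since $\beta=0$ for a $\mathbb{T}$-invariant 0-form). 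The normalized quotient and its entire completion then inherit the continuous chain map from $\mathscr{C}\to\Omega(LN)$, $\mathscr{N}\to\Omega(LN)$, and by density from $\mathscr{N}_\epsilon\to\widehat{\Omega}(LN)$.
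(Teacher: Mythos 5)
Your outline follows the paper's proof step for step: the mod-$2$ degree count, the Stokes/simplex-decomposition argument for the chain-map identity, the $1/n!$ volume estimate absorbed into the equivalent seminorms $\kappa_{\varepsilon,l}$ for continuity, and the two vanishing arguments on $\mathscr{D}$ (the factor $\iota f(t_r)-0=0$ for relations of type (\ref{titty0}), and integration of $\tfrac{d}{dt_r}f(t_r)$ over $t_r\in[t_{r-1},t_{r+1}]$ for type (\ref{titty})) are exactly what the paper does. The continuity and normalization parts of your sketch are complete and correct as written.

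The one place where your sketch is not merely incomplete but, as literally stated, wrong is the assertion that $\rho\circ B=P\circ\rho$ holds as a standalone identity (and, implicitly, that the $b$-terms are exactly accounted for by $d$ plus the simplex boundary faces). Neither correspondence closes on its own. In the paper's computation, $\tilde\rho\, b\langle\omega_0\otimes\cdots\otimes\omega_n\rangle$ equals $d\tilde\rho\langle\cdots\rangle$ \emph{plus} two residual families of terms: one where $\alpha_0(0)$ is replaced by $\beta_0(0)$, and one where a factor $\iota\alpha_j(t_j)-\beta_j(t_j)$ is replaced by $-\iota\beta_j(t_j)$ (the latter arising because $\iota d\alpha_j=\tfrac{d}{dt_j}\alpha_j-d\iota\alpha_j$ and $d_{\mathbb T}\omega_j$ contains $\iota_{\partial_{\mathbb T}}$-contributions). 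The computation of $P\tilde\rho$ produces $\tilde\rho B$ — via the decomposition of $\mathbb{T}\times\Delta_n$ into $n+1$ copies of $\Delta_{n+1}$, which is where the cyclic symmetrization of $B$ actually comes from, not from the face $t_n=1$ (that face yields the cyclic product term $\omega_n\omega_0$ of $b$) — \emph{plus} the $\mathbb{T}$-averages of the same two residual families. Only after averaging the $b$/$d$ identity over $\mathbb{T}$ and subtracting do the residues cancel and give $\rho(b+B)=(d+P)\rho$. Since your proposal defers the sign bookkeeping anyway, this is fixable, but a proof organized around the two separate identities $\rho b=d\rho$ and $\rho B=P\rho$ would fail; the cancellation is genuinely a cross-term phenomenon between the two halves.
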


\begin{proof} i) The fact that (\ref{awayy}) is a map of superspaces follows easily from observing that 
\begin{align*}
&\mathscr{C}^+(\Omega_{\mathbb{T}}(N\times \mathbb{T}))=\bigoplus^{\infty}_{j=0}\mathscr{C}^{2j}(\Omega_{\mathbb{T}}(N\times \mathbb{T})),\\
&\mathscr{C}^-(\Omega_{\mathbb{T}}(N\times \mathbb{T}))=\bigoplus^{\infty}_{j=0}\mathscr{C}^{2j+1}(\Omega_{\mathbb{T}}(N\times \mathbb{T})),
\end{align*}
where 
\begin{align*}
&\mathscr{C}^k(\Omega_{\mathbb{T}}(N\times \mathbb{T}))\\
&=\bigoplus^{\infty}_{r=0}\bigoplus_{l_0+\cdots+l_r=k+r}\Omega^{l_0}_{\mathbb{T}}(N\times \mathbb{T}))\otimes \underline{\Omega^{l_1}_{\mathbb{T}}(N\times \mathbb{T}))}\otimes\cdots\otimes\underline{\Omega^{l_r}_{\mathbb{T}}(N\times \mathbb{T}))}, 
\end{align*}
and that $\rho$ maps $\mathscr{C}^k(\Omega_{\mathbb{T}}(N\times \mathbb{T}))\to \Omega^k(LN)$. \\
ii) { Next we show that $\rho (b+B)= (d+P)\rho$. Setting $\omega_j=\alpha_j+\vartheta_{\mathbb T}\wedge \beta_j$, we first notice 
\begin{align}\label{rhob}
 \tilde\rho b \left\langle \omega_0 \otimes \cdots \otimes \omega_n\right\rangle=
 &\tilde \rho\left\langle  d_{\mathbb T}\omega_0\otimes \cdots \otimes \omega_{j-1} \otimes \omega_j \otimes \omega_{j+1}\otimes \cdots \otimes \omega_n\right\rangle \cr
 &-\tilde\rho\left\langle \sum_{j=1}^n (-1)^{r_{j-1}} \omega_0\otimes \cdots \otimes \omega_{j-1} \otimes d_{\mathbb T}\omega_j \otimes \omega_{j+1}\otimes \cdots \otimes \omega_n\right\rangle\cr
 &-\tilde\rho\left\langle \sum_{j=0}^{n-1} (-1)^{r_j} \omega_0\otimes \cdots \otimes \omega_{j-1} \otimes \omega_j\wedge \omega_{j+1} \otimes \omega_{j+2} \otimes \cdots \otimes \omega_n\right\rangle\cr
 & +(-1)^{(j_n-1)r_{n-1}} \tilde\rho\left\langle \omega_n \wedge \omega_0 \otimes \omega_1 \otimes \cdots \otimes \omega_{n-1}\right\rangle.
\end{align}
The first two lines give
{\small
\begin{align*}
&\int_{\Delta_n} (d\alpha_0(0)+\beta_0(0))\wedge (\iota\alpha_1(t_1)-\beta_1(t_1))\wedge \cdots \wedge (\iota\alpha_n(t_n)-\beta_n(t_n))d^nt \cr
 &-\sum_{j=1}^n (-1)^{r_{j-1}}\int_{\Delta_n} \alpha_0(0) (\iota\alpha_1(t_1)-\beta_1(t_1)) \wedge \cdots \wedge  (\iota\alpha_{j-1}(t_{j-1})-\beta_{j-1}(t_{j-1})) \wedge \cr
& \quad \wedge (\iota d\alpha_j(t_{j})+\iota\beta_{j}(t_{j-1})+d\beta_j(t_j))\wedge (\iota\alpha_{j+1}(t_{j+1})-\beta_{j+1}(t_{j+1}))\wedge \cdots \wedge(\iota\alpha_n(t_n)-\beta_n(t_{n})) d^nt,
\end{align*}
}
where $d^n t=dt_1\cdots dt_n.$
Using that 
\begin{align*}
 \Delta_n=\{(t_1,t_2,\ldots,t_n): 0\leq t_1\leq \ldots \leq t_{j-1} \leq t_j \leq t_{j+1} \leq \ldots \leq t_n \},
\end{align*}
and that
\begin{align*}
 \iota d\alpha_j (t_j)=\frac d{dt_j} \alpha_j (t_j) - d\iota \alpha_j (t_j),
\end{align*}
it can be rewritten as
{\small
\begin{align*}
&\int_{\Delta_n} (d\alpha_0(0)+\beta_0(0))\wedge (\iota\alpha_1(t_1)-\beta_1(t_1))\wedge \cdots \wedge (\iota\alpha_n(t_n)-\beta_n(t_n))d^nt \cr
 &+\sum_{j=1}^n (-1)^{r_{j-1}}\int_{\Delta_n} \alpha_0(0) (\iota\alpha_1(t_1)-\beta_1(t_1)) \wedge \cdots \wedge  (\iota\alpha_{j-1}(t_{j-1})-\beta_{j-1}(t_{j-1})) \wedge \cr
& \quad \wedge d(\iota \alpha_j(t_{j})-\beta_j(t_j))\wedge (\iota\alpha_{j+1}(t_{j+1})-\beta_{j+1}(t_{j+1}))\wedge \cdots \wedge(\iota\alpha_n(t_n)-\beta_n(t_{n})) d^nt \cr
 &-\sum_{j=1}^n (-1)^{r_{j-1}}\int_{\Delta_n} \alpha_0(0) (\iota\alpha_1(t_1)-\beta_1(t_1)) \wedge \cdots \wedge  (\iota\alpha_{j-1}(t_{j-1})-\beta_{j-1}(t_{j-1})) \wedge \cr
& \quad \wedge \frac{d}{dt_j} \alpha_j(t_j)\wedge (\iota\alpha_{j+1}(t_{j+1})-\beta_{j+1}(t_{j+1}))\wedge \cdots \wedge(\iota\alpha_n(t_n)-\beta_n(t_{n})) d^nt \cr
 &-\sum_{j=1}^n (-1)^{r_{j-1}}\int_{\Delta_n} \alpha_0(0) (\iota\alpha_1(t_1)-\beta_1(t_1)) \wedge \cdots \wedge  (\iota\alpha_{j-1}(t_{j-1})-\beta_{j-1}(t_{j-1})) \wedge \cr
& \quad \wedge \iota \beta_j(t_j)\wedge (\iota\alpha_{j+1}(t_{j+1})-\beta_{j+1}(t_{j+1}))\wedge \cdots \wedge(\iota\alpha_n(t_n)-\beta_n(t_{n})) d^nt.
\end{align*}
}

The first two (three) lines give 
{\small
\begin{align}
d\tilde\rho \left\langle \omega_0 \otimes \cdots \otimes \omega_n\right\rangle +\int_{\Delta_n} \beta_0(0)\wedge (\iota\alpha_1(t_1)-\beta_1(t_1))\wedge \cdots \wedge (\iota\alpha_n(t_n)-\beta_n(t_n))d^nt ,
\end{align}
}

while the third (fourth and fifth) line can be integrated in $t_j$ from $t_{j-1}$ to $t_{j+1}$ thus getting
{\small
\begin{align}
& d\tilde\rho \left\langle \omega_0 \otimes \cdots \otimes \omega_n\right\rangle+\int_{\Delta_n} \beta_0(0)\wedge (\iota\alpha_1(t_1)-\beta_1(t_1))\wedge \cdots \wedge (\iota\alpha_n(t_n)-\beta_n(t_n))d^nt\cr
&-\sum_{j=1}^{n-1} (-1)^{r_{j-1}}\int_{\Delta_{n-1}} \alpha_0(0) \wedge(\iota\alpha_1(t_1)-\beta_1(t_1)) \wedge \cdots \wedge  (\iota\alpha_{j-1}(t_{j-1})-\beta_{j-1}(t_{j-1})) \wedge \cr
& \quad \wedge \alpha_j(t_{j+1})\wedge (\iota\alpha_{j+1}(t_{j+1})-\beta_{j+1}(t_{j+1}))\wedge \cdots \wedge(\iota\alpha_n(t_n)-\beta_n(t_{n})) d^nt_j \cr
&- (-1)^{r_{n-1}}\int_{\Delta_{n-1}} \alpha_0(0) \wedge (\iota\alpha_1(t_1)-\beta_1(t_1)) \wedge \cdots  \wedge (\iota\alpha_{n-1}(t_{n-1})-\beta_{n-1}(t_{n-1})) \wedge \alpha_n(1) d^n t_n\cr
&+\sum_{j=2}^{n} (-1)^{r_{j-1}}\int_{\Delta_{n-1}} \alpha_0(0) \wedge(\iota\alpha_1(t_1)-\beta_1(t_1)) \wedge \cdots \wedge  (\iota\alpha_{j-1}(t_{j-1})-\beta_{j-1}(t_{j-1})) \wedge \cr
& \quad \wedge \alpha_j(t_{j-1})\wedge (\iota\alpha_{j+1}(t_{j+1})-\beta_{j+1}(t_{j+1}))\wedge \cdots \wedge(\iota\alpha_n(t_n)-\beta_n(t_{n})) d^nt_j \cr
&+ (-1)^{r_{0}}\int_{\Delta_{n-1}} \alpha_0(0)\wedge \alpha_1(0) \wedge (\iota\alpha_{2}(t_{2})-\beta_{2}(t_{2})) \wedge \cdots \wedge(\iota\alpha_n(t_n)-\beta_n(t_{n})) d^nt_1 \cr
&-\sum_{j=1}^n (-1)^{r_{j-1}}\int_{\Delta_n} \alpha_0(0)\wedge (\iota\alpha_1(t_1)-\beta_1(t_1)) \wedge \cdots \wedge  (\iota\alpha_{j-1}(t_{j-1})-\beta_{j-1}(t_{j-1})) \wedge \cr
& \quad \wedge \iota \beta_j(t_j)\wedge (\iota\alpha_{j+1}(t_{j+1})-\beta_{j+1}(t_{j+1}))\wedge \cdots \wedge(\iota\alpha_n(t_n)-\beta_n(t_{n})) d^nt,
\end{align}\label{somma}
}
where $d^n t_j =dt_1\cdots dt_{j-1} dt_{j+1}\cdots dt_n$. If in the fourth sum of integrals we change the summation variable from $j$ to $j+1$, then make the change of variable $t_j\rightarrow t_{j+1}$, and put it together with the second sum of integrals, 
after noting that $(-1)^{r_{j-1}}(-1)^{j_j}=-(-1)^{r_j}$, then summing the fourth and the second integrals we get
{\small
\begin{align*}
&-\sum_{j=1}^{n-1} (-1)^{r_{j-1}}\int_{\Delta_{n-1}} \alpha_0(0) \wedge(\iota\alpha_1(t_1)-\beta_1(t_1)) \wedge \cdots \wedge  (\iota\alpha_{j-1}(t_{j-1})-\beta_{j-1}(t_{j-1})) \wedge \cr
& \quad \wedge \left[\alpha_j(t_{j+1})\wedge (\iota\alpha_{j+1}(t_{j+1})-\beta_{j+1}(t_{j+1}))\right]\wedge \cdots \wedge(\iota\alpha_n(t_n)-\beta_n(t_{n})) d^nt_j \cr
&+\sum_{j=1}^{n-1} (-1)^{r_{j}}\int_{\Delta_{n-1}} \alpha_0(0) \wedge(\iota\alpha_1(t_1)-\beta_1(t_1)) \wedge \cdots \wedge  (\iota\alpha_{j-1}(t_{j-1})-\beta_{j-1}(t_{j-1})) \wedge \cr
& \quad \wedge  \left[(\iota\alpha_{j}(t_{j+1})-\beta_{j}(t_{j+1}))\wedge \alpha_{j+1}(t_{j+1})\right]\wedge \cdots \wedge(\iota\alpha_n(t_n)-\beta_n(t_{n})) d^nt_j \cr
&=\sum_{j=1}^{n-1} (-1)^{r_{j}}\int_{\Delta_{n-1}} \alpha_0(0) \wedge(\iota\alpha_1(t_1)-\beta_1(t_1)) \wedge \cdots \wedge  (\iota\alpha_{j-1}(t_{j-1})-\beta_{j-1}(t_{j-1})) \wedge \cr
& \quad \wedge  \left[(\iota\alpha_{j}(t_{j+1})-\beta_{j}(t_{j+1}))\wedge \alpha_{j+1}(t_{j+1})+(-1)^{j_j-1} \alpha_j(t_{j+1})\wedge (\iota\alpha_{j+1}(t_{j+1})-\beta_{j+1}(t_{j+1}))
\right]\wedge \cr
&\quad \wedge \cdots \wedge(\iota\alpha_n(t_n)-\beta_n(t_{n})) d^nt_j \cr
&=\sum_{j=1}^{n-1} (-1)^{r_{j}} \tilde\rho \langle\omega_0 \otimes \cdots \otimes  \omega_{j-1} 
\otimes \omega_{j}\wedge \omega_{j+1} \otimes  \omega_{j+2} \otimes \cdots \otimes \omega_n \rangle,
\end{align*}
}
which including the fifth integral in (\ref{somma}) becomes
\begin{align*}
\tilde\rho \left\langle\sum_{j=0}^{n-1} (-1)^{r_{j}}  \omega_0 \otimes \cdots \otimes  \omega_{j-1} 
\otimes \omega_{j}\wedge \omega_{j+1} \otimes  \omega_{j+2} \otimes \cdots \otimes \omega_n \right\rangle.
\end{align*}
This cancels the second line of (\ref{rhob}). After noting that $\alpha_n(1)=\alpha_n(0)$, we see that the third integral in (\ref{somma}) is just
\begin{align*}
 -(-1)^{(j_n-1)r_{n-1}} \tilde\rho \left\langle \omega_n \wedge \omega_0 \otimes \omega_1 \otimes \cdots \otimes \omega_{n-1}\right\rangle,
\end{align*}
which cancels the third line of (\ref{rhob}). Thus, we get
\begin{align}\label{rhobidrho}
&\tilde\rho b\left\langle \omega_0 \otimes \cdots \otimes \omega_n \right\rangle=d\tilde\rho  \left\langle \omega_0 \otimes \cdots \otimes \omega_n\right\rangle\cr
&\quad+\int_{\Delta_n} \beta_0(0)\wedge (\iota\alpha_1(t_1)-\beta_1(t_1))\wedge \cdots \wedge (\iota\alpha_n(t_n)-\beta_n(t_n))d^nt\cr
&\quad-\sum_{j=1}^n (-1)^{r_{j-1}}\int_{\Delta_n} \alpha_0(0)\wedge (\iota\alpha_1(t_1)-\beta_1(t_1)) \wedge \cdots \wedge  (\iota\alpha_{j-1}(t_{j-1})-\beta_{j-1}(t_{j-1})) \wedge \cr
& \qquad \wedge \iota \beta_j(t_j)\wedge (\iota\alpha_{j+1}(t_{j+1})-\beta_{j+1}(t_{j+1}))\wedge \cdots \wedge(\iota\alpha_n(t_n)-\beta_n(t_{n})) d^nt.
\end{align}
Now, let us consider 
{\small
\begin{align}
 P\tilde\rho  \left\langle \omega_0 \otimes \cdots \otimes \omega_n\right\rangle=& \int_I ds \phi_s^* \iota \int_{\Delta_n} \alpha_0(0) \wedge (\iota\alpha_1(t_1)-\beta_1(t_1)) \wedge \cdots \wedge (\iota\alpha_n(t_n)-\beta_n(t_n)) d^n t \cr
 =\int_{I\times \Delta_n} \iota\alpha_0(s)\wedge &(\iota\alpha_1(t_1+s)-\beta_1(t_1+s)) \wedge \cdots \wedge (\iota\alpha_n(t_n+s)-\beta_n(t_n+s)) d^n t ds\cr
 -\sum_{j=1}^n (-1)^{r_{j-1}}\int_I ds \phi_s^* &\int_{\Delta_n}  \alpha_0(0)\wedge (\iota\alpha_1(t_1)-\beta_1(t_1)) \wedge \cdots \wedge  (\iota\alpha_{j-1}(t_{j-1})-\beta_{j-1}(t_{j-1})) \wedge \cr
& \wedge \iota \beta_j(t_j)\wedge (\iota\alpha_{j+1}(t_{j+1})-\beta_{j+1}(t_{j+1}))\wedge \cdots \wedge(\iota\alpha_n(t_n)-\beta_n(t_{n})) d^nt,
\end{align}\label{pirho}
}
where now $I$ must be identified with the circle $\mathbb T$, and where we used that 
$$
\iota(\iota\alpha_{k}(t_{k})-\beta_{k}(t_{k}))=-\iota \beta_{k}(t_{k}).
$$
Now, for any given choice of $\bar t=(t_1,\ldots,t_n)$ such that $0\leq t_1 \leq \cdots \leq t_n \leq 1$, we can understand $\mathbb T$ as the union of almost everywhere $n+1$ disjoint intervals
defined by
\begin{align*}
 I_j(\bar t)=\{ s\in \mathbb T| t_{j-1} +s\leq 1,\   t_{j} +s-1 \geq 0 \}, \quad j=1,\ldots, n+1.
\end{align*}
We see that
\begin{align*}
 D_j=\{ I_j(\bar t) \times \bar t\ |\ \bar t \in \Delta_n \}
\end{align*}
is a $(n+1)$-simplex for any given $j$, and
\begin{align*}
\bigcup_{j=1}^{n+1} D_j=I\times \Delta_n
\end{align*}
while $D_j\cap D_k$ has zero measure if $j\neq k$. Therefore, 
{\small
\begin{align*}
&\int_{I\times \Delta_n} \iota\alpha_0(s)\wedge (\iota\alpha_1(t_1+s)-\beta_1(t_1+s)) \wedge \cdots \wedge (\iota\alpha_n(t_n+s)-\beta_n(t_n+s)) d^n t ds\cr
&=\int_{I\times \Delta_n} \beta_0(s)\wedge (\iota\alpha_1(t_1+s)-\beta_1(t_1+s)) \wedge \cdots \wedge (\iota\alpha_n(t_n+s)-\beta_n(t_n+s)) d^n t ds\cr
&+\int_{I\times \Delta_n} (\iota\alpha_0(s)-\beta_0)\wedge (\iota\alpha_1(t_1+s)-\beta_1(t_1+s)) \wedge \cdots \wedge (\iota\alpha_n(t_n+s)-\beta_n(t_n+s)) d^n t ds\cr
&=\int_I ds \phi^*_s \int_{\Delta_n} \beta_0(0)\wedge (\iota\alpha_1(t_1)-\beta_1(t_1)) \wedge \cdots \wedge (\iota\alpha_n(t_n)-\beta_n(t_n)) d^n t ds\cr
&+\sum_{j=1}^{n+1} \int_{D_j} (\iota\alpha_0(s)-\beta_0(s))\wedge (\iota\alpha_1(t_1+s)-\beta_1(t_1+s)) \wedge \cdots \wedge (\iota\alpha_n(t_n+s)-\beta_n(t_n+s)) d^n t ds.
\end{align*}
}
Now, for any given $j$ we introduce the variables
\begin{align*}
\tau_k&=t_{j+k-1}+s-1, \quad k=1,\ldots, n+1-j, \cr
\tau_{n+2-j}&=s, \cr
\tau_k&=t_{k+j-n-2}+s,\quad k= n+3-j, \ldots, n+1 \quad (\mbox{if }j\geq 2). 
\end{align*}
In this coordinates we have
\begin{align*}
 D_j=\{(\tau_1,\ldots,\tau_{n+1})| 0\leq \tau_1\leq \cdots \leq \tau_{n+1}\leq 1\}\equiv \Delta_{n+1}, \quad d^n t ds=d^{n+1}\tau,
\end{align*}
and
{\small 
\begin{align*}
 &(\iota\alpha_0(s)-\beta_0(s))\wedge (\iota\alpha_1(t_1+s)-\beta_1(t_1+s)) \wedge \cdots \wedge (\iota\alpha_n(t_n+s)-\beta_n(t_n+s)) \cr
 &=(-1)^{r_{j-1}(r_n-r_j)}1\wedge (\iota\alpha_j(\tau_1)-\beta_j(\tau_1))\wedge \cdots \wedge (\iota\alpha_n (\tau_{n-j+1})-\beta_n (\tau_{n-j+1})) \wedge \cr
 &\quad \wedge (\iota\alpha_0(\tau_{n-j+2})-\beta_0(\tau_{n-j+2}))\wedge \cdots \wedge (\iota\alpha_{j-1}(\tau_{n+1})-\beta_{j-1}(\tau_{n+1})).
\end{align*}
}
Integrating over $D_j=\Delta_{n+1}$ it becomes
{\small 
\begin{align*}
 &\int_{D_j}(\iota\alpha_0(s)-\beta_0(s))\wedge (\iota\alpha_1(t_1+s)-\beta_1(t_1+s)) \wedge \cdots \wedge (\iota\alpha_n(t_n+s)-\beta_n(t_n+s))\cr
 &=\rho\left\langle (-1)^{r_{j-1}(r_n-r_j)}1\otimes \omega_j \otimes \cdots \otimes \omega_n \otimes \omega_0\otimes \cdots \otimes \omega_{j-1}\right\rangle,
\end{align*}
}
and after summation over $j$ we finally get 
{\small
\begin{align*}
&P\tilde\rho\left\langle \omega_0 \otimes \cdots \otimes \omega_n\right\rangle=\tilde\rho B\left\langle \omega_0 \otimes \cdots \otimes \omega_n\right\rangle \cr
&+\int_I ds \phi^*_s \int_{\Delta_n} \beta_0(0)\wedge (\iota\alpha_1(t_1)-\beta_1(t_1)) \wedge \cdots \wedge (\iota\alpha_n(t_n)-\beta_n(t_n)) d^n t ds \cr
& -\sum_{j=1}^n (-1)^{r_{j-1}}\int_I ds \phi_s^* \int_{\Delta_n}  \alpha_0(0)\wedge (\iota\alpha_1(t_1)-\beta_1(t_1)) \wedge \cdots \wedge  (\iota\alpha_{j-1}(t_{j-1})-\beta_{j-1}(t_{j-1})) \wedge \cr
& \wedge \iota \beta_j(t_j)\wedge (\iota\alpha_{j+1}(t_{j+1})-\beta_{j+1}(t_{j+1}))\wedge \cdots \wedge(\iota\alpha_n(t_n)-\beta_n(t_{n})) d^nt.
\end{align*}
}
Notice that the second and third lines here are the means over $\mathbb T$ of the corresponding terms in (\ref{rhobidrho}). After taking the mean of both expressions and subtracting each other, we finally
get $\rho (b+B)= (d+P)\rho$ as desired.\\
iii) We now prove that $\tilde \rho$ vanishes on $\mathscr{D}(\Omega_{\mathbb{T}}(N\times \mathbb{T}))$. This implies that $\rho$ vanishes on $\mathscr{D}(\Omega_{\mathbb{T}}(N\times \mathbb{T}))$, too. For elements of the form (\ref{titty0}) the assertion immediately follows from the fact that $\iota f(t)=0$, as $f(t)$ is a zero form. So, let us consider an element of the form (\ref{titty}). Since (recall that $f$ is constant over $\mathbb T$)
$$
\iota d f(t)=\frac {d}{dt} f(t), 
$$
and $df=d_{\mathbb T} f$, we can write
{\small
\begin{align*}
\tilde\rho(&\left\langle \omega_{0}\otimes \cdots\otimes \omega_{r-1} f\otimes\omega_{r+1}\otimes \cdots\otimes \omega_{n}\right\rangle+\left\langle \omega_{0}\otimes \cdots\otimes \omega_{r-1}\otimes df\otimes\omega_{r+1}\otimes \cdots\otimes \omega_{n}\right\rangle\cr
&\quad-\left\langle \omega_{0}\otimes \cdots\otimes \omega_{r-1}\otimes f\omega_{r+1}\otimes \cdots\otimes \omega_{n}\right\rangle)\cr
&=\int_{\Delta_{n-1}} \alpha_0(0) \wedge \cdots \wedge (\iota\alpha_{r-1}(t_{r-1})f(t_{r-1})-\beta_{r-1}(t_{r-1})f(t_{r-1}))
\wedge (\iota\alpha_{r+1}(t_{r+1})-\beta_{r+1}(t_{r+1})) \wedge \cr
&\qquad\ \wedge \cdots \wedge (\iota\alpha_{n}(t_{n})-\beta_{n}(t_{n})) d^n t_r \cr
&-\int_{\Delta_{n-1}} \alpha_0(0) \wedge \cdots \wedge (\iota\alpha_{r-1}(t_{r-1})-\beta_{r-1}(t_{r-1}))
\wedge (f(t_{r+1})\iota\alpha_{r+1}(t_{r+1})-f(t_{r+1})\beta_{r+1}(t_{r+1})) \wedge \cr
&\qquad\ \wedge \cdots \wedge (\iota\alpha_{n}(t_{n})-\beta_{n}(t_{n})) d^n t_r\cr
&+\int_{\Delta_n} \alpha_0(0) \wedge \cdots \wedge (\iota\alpha_{r-1}(t_{r-1})-\beta_{r-1}(t_{r-1})) \wedge \frac {d}{dt_r} f(t_r)
\wedge (\iota\alpha_{r}(t_{r})-\beta_{r}(t_{r})) \wedge \cr
&\qquad\ \wedge \cdots \wedge (\iota\alpha_{n}(t_{n})-\beta_{n}(t_{n})) d^nt.
\end{align*}
After integrating $t_r$ from $t_{r-1}$ to $t_{r+1}$ in the last term, we get exactly zero.}\\
v)} It remains to check the continuity of (\ref{awayy}), which easily follow from the continuity of $\tilde\rho$. To see the latter, let $X$ be a smooth manifold (without boundary), let $\varepsilon$ be a continuous seminorm on $\Omega(X)$, and let $f:X\to LN$ 
be smooth. For $s\in\mathbb{T}$ let $r_{s}$ denote the embedding
$$
X \longrightarrow  X\times \mathbb T, \ x\longmapsto (x,s).
$$
Then we have 
\begin{align*}
&\varepsilon_f\left(\tilde\rho\left\langle (\alpha_0+\vartheta_\mathbb{T}\wedge\beta_0) \otimes \dots \otimes(\alpha_{n}+\vartheta_\mathbb{T}\wedge\beta_{n})  \right\rangle \right)\\
&\leq \int_{\Delta_n} \varepsilon( f^*[\alpha_0(0)] )\prod^n_{i=1} \varepsilon\big(f^*[\iota\alpha_i(t_i)-\beta_i(t_i) ] \big)\ d t_1\cdots d  t_{n}\\
&=\int_{\Delta_n} \varepsilon( r_0^*\hat{f}^*\alpha_0 )\prod^n_{i=1} \varepsilon\big(r_{t_i}^*\iota_{\partial_{\mathbb{T}}}\hat{f}^*\alpha_i-r_{t_i}^*\hat{f}^*\beta_i  \big)\ d t_1\cdots d  t_{n}\\
&\leq\int_{\Delta_n} \varepsilon( r_0^*\hat{f}^*\alpha_0 )\prod^n_{i=1}\Big( \varepsilon\big(r_{t_i}^*\iota_{\partial_{\mathbb{T}}}\hat{f}^*\alpha_i\big)+\varepsilon\big(r_{t_i}^*\hat{f}^*\beta_i  \big)\Big)\ d t_1\cdots d  t_{n}\\
&\leq\int_{\Delta_n} \tilde{\varepsilon}( \alpha_0 )\prod^n_{i=1}\Big( \tilde{\varepsilon}(\alpha_i)+\tilde{\varepsilon}(\beta_i  )\Big)\ d t_1\cdots d  t_{n}
\\
&\leq \frac{1}{n!} \prod^n_{i=0}\Big( \tilde{\varepsilon}(\alpha_i)+\tilde{\varepsilon}(\beta_i  )\Big)= \frac{1}{n!} \tilde{\varepsilon}^{\mathbb{T}}_n\Big ((\alpha_0+\vartheta_\mathbb{T}\wedge\beta_0) \otimes \dots 
\otimes(\alpha_{n}+\vartheta_\mathbb{T}\wedge\beta_{n}) \Big),
\end{align*}
for some continuous seminorm $\tilde{\varepsilon}$ on $\Omega(N)$. This estimate shows the continuity of $\tilde\rho$ and completes the proof.

\end{proof}


\section{Construction of cycles in $\mathscr{N}^{-}_{\epsilon}(\Omega_{\mathbb{T}}(M\times \mathbb{T}))$ and the induced cycles in $\widehat{\Omega}^-(LM)$}

Let now $M$ be a compact manifold (possibly with boundary). Given $g\in C^{\infty}(M, U(l\times l;\IC))$ our aim is to construct a canonically given element 
$$
\mathrm{Ch}^-(g)\in \mathscr{C}^{-}_{\epsilon}(\Omega_{\mathbb{T}}(M\times \mathbb{T}))
$$ 
with $(b+B)\mathrm{Ch}^-(g)=0$ in the Chen normalized complex. To this end, let $I:=[0,1]$ and denote the canonical vector field on $I$ with $\partial_I$. We denote the canonical Maurer-Cartan form on $U(l\times l;\IC)$ by 
$$
\omega \in \Omega^1\big(U(l\times l;\IC), \mathrm{Mat}(l\times l;\IC)  \big).
$$
Then for all $s\in I$ we can form the covariant derivative $d+s\omega$ on the trivial vector bundle $U(l\times l;\IC)\times  \IC^l\to U(l\times l;\IC)$. Let 
$$
A^s \in \Omega^1\big(U(l\times l;\IC) , \mathrm{Mat}(l\times l;\IC)  \big)  ,\quad   R^s \in \Omega^2\big(U(l\times l;\IC), \mathrm{Mat}(l\times l;\IC)  \big)
 $$	
denote the connection $1$-form of $d+s\omega $ and the curvature of $d+s\omega $, respectively, and 
$$
\mathcal{A}^s :=A^s -\vartheta_\mathbb{T}\wedge R^s\in \Omega_{\mathbb{T}}\big(U(l\times l;\IC) \times \mathbb{T}, \mathrm{Mat}(l\times l;\IC)\big).
$$

We set
$$
A^s(g):= g^*A^s,\quad R^s_g:= g^*R^s,\quad \omega_g:= g^* \omega,
$$
so that $A^s(g)=s \omega_g$ and by the Maurer-Cartan equation $R^s_g=(s/2)\omega^2_g$. Then we can define 
$$
\mathcal{A}^s(g):=A^s_g -\vartheta_\mathbb{T}\wedge  R^s_g  \in \Omega_\mathbb{T}(M\times  \mathbb{T}, \mathrm{Mat}(l\times l;\IC)).
$$
By varying $s$, the forms $\mathcal{A}^s(g)$ induce a form
$$
\mathcal{A}(g) \in \Omega_\mathbb{T}(M\times  I \times \mathbb{T}, \mathrm{Mat}(l\times l;\IC))
$$
and we set
$$
\mathcal{B}(g):= \iota_{\partial_I}\mathcal{A}(g) \in \Omega_\mathbb{T}(M\times   I \times \mathbb{T}, \mathrm{Mat}(l\times l;\IC)).
$$
Then we can define
$$
 \mathcal{B}^s(g)\in \Omega_\mathbb{T}(M\times  \mathbb{T}, \mathrm{Mat}(l\times l;\IC)),
$$
to be the pullback of $\mathcal{B}(g)$ with respect to the embedding
$$
M\times  \mathbb{T}\longrightarrow M\times   I \times \mathbb{T}, \quad (x,t)\longmapsto (x,s,t).
$$

In fact, by a simple calculation one finds
\begin{align}\label{idid}
 \mathcal A^s(g)=s \omega_g +s(1-s)\vartheta_\mathbb{T}\wedge \omega^2_g, \quad \mathcal B^s(g)=-\vartheta_\mathbb{T}\wedge \omega_g,
 \end{align}
so that $\mathcal{B}^s(g)$ actually does not depend on $s$. With these preparations, we can define an element
$$
\mathrm{Ch}^-(g)=(\mathrm{Ch}^-_0(g) ,\mathrm{Ch}^-_1(g) ,\dots)\in \mathscr{C}(\Omega_{\mathbb{T}}(M\times \mathbb{T}))
$$
by setting 

$$
\mathrm{Ch}^-_n(g) := \mathrm{Tr}_n\left[ \int^1_0 1  \otimes \sum^n_{k=1}  {\mathcal A}^{s}(g)^{\otimes (k-1)} \otimes   {\mathcal B}^s(g)  \otimes  {\mathcal A}^{s}(g)^{\otimes (n-k)}ds\right],
$$
where given linear spaces $V_0,\dots,V_n$, and $ v^{(j)}\in \mathrm{Mat}(l\times l;V_i)$, $j=0,\dots, n$, the generalized trace is defined by  
$$
\mathrm{Tr}_{n}[v^{(0)}\otimes\cdots\otimes v^{(n)}]:=\sum_{i_0,\dots,i_n=1,\dots l} v^{(0)}_{i_0,i_1}\otimes v^{(1)}_{i_1,i_2} \otimes\cdots \otimes v^{(n)}_{i_{n},i_0}.
$$

We refer the reader to the paper \cite{simonssullivan} by Simons and Sullivan, where a construction of the usual odd Chern character $\mathrm{ch}^-(g)\in\Omega^-(M)$ (cf. formula (\ref{daaqd}) below) has been given that influenced our definition of $\mathrm{Ch}^-(g)$.

\begin{Theorem}\label{main} Let $M$ be a compact manifold, possibly with boundary.\\
a) One has 
$$
\mathrm{Ch}^-(g)\in \mathscr{C}^{-}_{\epsilon}(\Omega_{\mathbb{T}}(M\times \mathbb{T})),\quad\text{and $(b+B) \mathrm{Ch}^-(g)=0$ in $\mathscr{N}_{\epsilon}(\Omega_{\mathbb{T}}(M\times \mathbb{T}))$},
$$
in particular, $\mathrm{Ch}^-(g)$ induces a homology class 
$$
\big[\mathrm{Ch}^-(g)\big]\in \mathsf{HN}^{-}_{\epsilon}(\Omega_{\mathbb{T}}(M\times \mathbb{T})).
$$
b) The map
$$
\mathsf{K}^{-1}(M)\longrightarrow \mathsf{HN}^{-}_{\epsilon}(\Omega_{\mathbb{T}}(M\times \mathbb{T})),\quad [g]\longmapsto \big[\mathrm{Ch}^-(g)\big]
$$ 
is a well-defined group homomorphism.
\end{Theorem}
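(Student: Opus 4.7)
The plan splits naturally into the three ingredients of part~(a) — parity, entire growth, and the cycle property — followed by the three properties needed for~(b): additivity under direct sum, stability, and homotopy invariance. The cycle property $(b+B)\mathrm{Ch}^-(g)\in\mathscr{D}_\epsilon$ is the chief technical obstacle.

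For the easy steps of~(a), observe from (\ref{idid}) that $\mathcal{A}^s(g)$ is homogeneous of degree~$1$ and $\mathcal{B}^s(g)$ of degree~$0$ in $\Omega_{\mathbb{T}}(M\times\mathbb{T})$. The $n$-th slot of $\mathrm{Ch}^-(g)$ has a leading~$1$ of degree~$0$, $n-1$ factors of $\mathcal{A}^s(g)$, and one factor of $\mathcal{B}^s(g)$, for internal degree $n-1$ and total grading $2n-1$ on $\Omega\otimes\underline{\Omega}^{\otimes n}$, odd for every~$n$; hence $\Gamma\mathrm{Ch}^-(g)=-\mathrm{Ch}^-(g)$. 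Compactness of $M\times[0,1]$ provides a uniform bound $\varepsilon^{\mathbb{T}}(\mathcal{A}^s(g)_{ij}),\varepsilon^{\mathbb{T}}(\mathcal{B}^s(g)_{ij})\le M_\varepsilon(g)$ on matrix entries for any continuous seminorm~$\varepsilon$. Since $\mathrm{Tr}_n$ expands as at most $l^{n+1}$ pure tensors and the $k$-sum has $n$ summands,
$$
\varepsilon^{\mathbb{T}}_n\bigl(\mathrm{Ch}^-_n(g)\bigr)\le n\,l^{n+1}\,M_\varepsilon(g)^n,
$$
so $\sum_n n\,l^{n+1}M_\varepsilon(g)^n/\sqrt{n!}<\infty$ and $\mathrm{Ch}^-(g)\in\mathscr{C}^-_{\epsilon}(\Omega_{\mathbb{T}}(M\times\mathbb{T}))$.

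For the cycle property, my strategy is transgression from an extended base. Put $\widetilde{\mathcal{A}}(g):=\mathcal{A}(g)+ds\wedge\mathcal{B}(g)$, the total matrix-valued form on $M\times I\times\mathbb{T}$ that restricts to $\mathcal{A}^s(g)$ on each slice and whose $\iota_{\partial_I}$-contraction is $\mathcal{B}^s(g)$. A direct computation using the Maurer-Cartan relation $d\omega_g=-\omega_g^2$ together with the Bianchi identity shows that $\widetilde{\mathcal{A}}(g)$ is equivariantly flat on the extended base:
$$
(d+\iota_{\partial_\mathbb{T}})\widetilde{\mathcal{A}}(g)+\widetilde{\mathcal{A}}(g)^2=0.
$$
Following the scheme used for the even case in~\cite{gjp}, this identity together with cyclicity of $\mathrm{Tr}_n$ implies that the even chain
$$
\widetilde{\mathrm{Ch}}(g):=\sum_{n\ge 0}\mathrm{Tr}_n\bigl\langle 1\otimes\widetilde{\mathcal{A}}(g)^{\otimes n}\bigr\rangle
$$
satisfies $(b+B)\widetilde{\mathrm{Ch}}(g)\in\mathscr{D}_\epsilon$ on $\Omega_{\mathbb{T}}(M\times I\times\mathbb{T})$: the Hochschild contributions from $(d+\iota_{\partial_\mathbb{T}})\widetilde{\mathcal{A}}$ cancel the $\widetilde{\mathcal{A}}^2$-contributions slot by slot, while the Connes pieces are absorbed by cyclic symmetry. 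The $ds$-component of $\widetilde{\mathrm{Ch}}(g)$ is exactly $\mathrm{Ch}^-(g)$, and the Stokes relation $\pi_{I*}\,d=d\,\pi_{I*}+\mathrm{ev}_1-\mathrm{ev}_0$ promoted slot-by-slot to $b+B$ gives
$$
(b+B)\mathrm{Ch}^-(g)\equiv\widetilde{\mathrm{Ch}}(g)\big|_{s=1}-\widetilde{\mathrm{Ch}}(g)\big|_{s=0}\pmod{\mathscr{D}_\epsilon}.
$$
Since $\mathcal{A}^0(g)=0$, the $s=0$ boundary is the constant $n=0$ cycle $l\langle\mathbf{1}\rangle$, while the $s=1$ boundary is $l\langle\mathbf{1}\rangle+\sum_{n\ge 1}\mathrm{Tr}_n\langle 1\otimes\omega_g^{\otimes n}\rangle$. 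The constant terms cancel, reducing the claim to $\sum_{n\ge 1}\mathrm{Tr}_n\langle 1\otimes\omega_g^{\otimes n}\rangle\in\mathscr{D}_\epsilon$. This is proved by systematically applying the Leibniz relation (\ref{titty}) entrywise with $f$ running over the scalar matrix elements of $g,g^{-1}$, using $g^{-1}g=\mathbf{1}$ in each factor $\omega_g=g^{-1}dg$; the resulting terms telescope through cyclicity of $\mathrm{Tr}_n$. This boundary analysis, modelled on the treatment of $\mathrm{ch}^-(g)$ in~\cite{simonssullivan}, is the main combinatorial obstacle of the proof, and the odd case adds $ds$-direction bookkeeping absent from the even argument of~\cite{gjp}.

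For part~(b), three observations yield the group homomorphism. \emph{Additivity under direct sum}: since $\omega_{g_1\oplus g_2}=\omega_{g_1}\oplus\omega_{g_2}$ is block-diagonal, $\mathrm{Tr}_n$ splits and $\mathrm{Ch}^-(g_1\oplus g_2)=\mathrm{Ch}^-(g_1)+\mathrm{Ch}^-(g_2)$ on the nose in $\mathscr{C}^-_\epsilon$. \emph{Stability}: $\mathrm{Ch}^-(g\oplus\mathbf{1}_k)=\mathrm{Ch}^-(g)$ since $\omega_{\mathbf{1}_k}=0$ kills all $\mathbf{1}_k$-indexed contributions in the generalized trace. \emph{Homotopy invariance}: given $G\in C^\infty(M\times[0,1],U(l\times l;\IC))$ joining $g_0$ and $g_1$, repeating the transgression mechanism of the cycle step with a two-parameter connection family on $M\times I\times[0,1]\times\mathbb{T}$ — now integrating out the extra homotopy direction — produces $\tau(G)\in\mathscr{C}^+_\epsilon(\Omega_{\mathbb{T}}(M\times\mathbb{T}))$ with $(b+B)\tau(G)\equiv\mathrm{Ch}^-(g_1)-\mathrm{Ch}^-(g_0)\pmod{\mathscr{D}_\epsilon}$. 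Together these imply that $[g]\mapsto[\mathrm{Ch}^-(g)]$ descends to $\mathsf{K}^{-1}(M)$ and defines a group homomorphism into $\mathsf{HN}^-_\epsilon(\Omega_{\mathbb{T}}(M\times\mathbb{T}))$.
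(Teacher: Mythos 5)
Your proposal is correct in outline and part (b) essentially coincides with the paper's argument: the paper reduces (b) to additivity of $\mathrm{Tr}_n$ under block sums and to homotopy invariance, the latter proved by an explicit chain $w$ built by exactly the slot-wise ``apply $\iota_{\partial_I}$ to one factor, pull back the rest by $j_t$, integrate over $t$'' recipe you sketch (your stability observation $\mathrm{Ch}^-(g\oplus\mathbf{1}_k)=\mathrm{Ch}^-(g)$ is a harmless and in fact necessary supplement the paper leaves implicit). Where you genuinely diverge is the cycle property in (a). The paper argues directly: $B\,\mathrm{Ch}^-(g)$ is Chen-degenerate because every slot contains the unit (relation (\ref{titty0}) with $f=1$), and $(b\,\mathrm{Ch}^-(g))_n=(b\langle\mathrm{Ch}^-_n(g)\rangle)_n+(b\langle\mathrm{Ch}^-_{n+1}(g)\rangle)_n$ is expanded term by term using (\ref{idid}); the two pieces assemble into $\mathrm{Tr}_n\big[\int_0^1 1\otimes\sum_k\mathcal A^s(g)^{\otimes(k-1)}\otimes\tfrac{d}{ds}\mathcal A^s(g)\otimes\mathcal A^s(g)^{\otimes(n-k)}\,ds\big]$, which integrates to the endpoint difference $\mathrm{Tr}_n[1\otimes\omega_g^{\otimes n}]$, killed exactly as you propose by rewriting $\omega_g\otimes\omega_g=-dg^{-1}\otimes dg$ and invoking (\ref{titty}) with $f=g^{-1}$. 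You instead derive this from an equivariantly flat even Chern chain on $M\times I\times\mathbb{T}$ plus a Stokes relation for slot-wise fiber integration; that is precisely the even/odd periodicity the paper records only \emph{a posteriori} in Remark \ref{remark even}, and derives \emph{from} the odd computation rather than conversely. Your route is conceptually cleaner and explains where the boundary term comes from, but the two lemmas you assert --- that flatness of $\widetilde{\mathcal A}(g)$ forces $(b+B)\widetilde{\mathrm{Ch}}(g)$ to be Chen-degenerate, and that slot-wise fiber integration intertwines $b+B$ up to $\mathrm{ev}_1-\mathrm{ev}_0$ and degenerate chains --- carry essentially all of the paper's computational load (the interaction of the product terms of $b$ with the slot-wise contraction $\iota_{\partial_I}$ is exactly the $d/ds$-bookkeeping the paper does by hand), and citing \cite{gjp} for the first requires checking that their statement and sign conventions apply over the base $M\times I$, which has boundary. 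So: same skeleton, different and more structural packaging of the key step, at the price of deferring the main computation into two unproved but true lemmas.
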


\begin{proof}a) It is easily seen that $\Gamma\mathrm{Ch}^-(g)=-\mathrm{Ch}^-(g)$. To show that 
$$
\mathrm{Ch}^-(g)\in \mathscr{C}^{-}_{\epsilon}(\Omega_{\mathbb{T}}(M\times \mathbb{T})),
$$
given a continuous seminorm $\varepsilon$ on $\Omega_{\mathbb{T}}(M\times \mathbb{T})$ set
$$
C_{\varepsilon}:=\sup_{s\in [0,1]}\max\Big(\varepsilon(1),\max_{i,j=1,\dots,l} \varepsilon(\mathcal{A}^{s}(g)_{ij}), \max_{i,j=1,\dots,l} \varepsilon(\mathcal{B}^{s}(g)_{ij})\Big).
$$
It is then easily checked that
\begin{align*}
\kappa_{\varepsilon}(\mathrm{Ch}^-(g))\leq  \sum_{n=0}^{\infty} n \frac{ (l^{2}C_{\varepsilon})^n}{\sqrt{n!}}<\infty. 
\end{align*}

It remains to prove 
$$
(b+B) \mathrm{Ch}^-(g)\in\mathscr{D}_{\epsilon}(\Omega_{\mathbb{T}}(M\times \mathbb{T})).
$$
In fact, 
$$
B\mathrm{Ch}^-(g)\in \mathscr{D}_{\epsilon}(\Omega_{\mathbb{T}}(M\times \mathbb{T})),
$$
as every $\left\langle \mathrm{Ch}^-_n(g) \right\rangle$ contains the $0$-form $1$ and so is of the form (\ref{titty0}) with $f=1$. It remains to show that 
$$
b\mathrm{Ch}^-(g)\in \mathscr{D}_{\epsilon}(\Omega_{\mathbb{T}}(M\times \mathbb{T})).
$$
 In order to see the latter, let us first notice that 
 \begin{align*}
 \left(b\mathrm{Ch}^-(g)\right)_n=\left(b\left\langle \mathrm{Ch}^-_n(g)\right\rangle\right)_n+\left(b\left\langle \mathrm{Ch}^-_{n+1}(g)\right\rangle \right)_n.
\end{align*}
Using (\ref{idid}) and the explicit definition of $b$, we get
{
\begin{align*}
&\left(b\left\langle \mathrm{Ch}^-_n(g)\right\rangle\right)_n \\
&=
-{\rm Tr}_n \left[\int_0^1  1 \otimes \sum_{k=1}^n  \sum_{l=0}^{k-2} \mathcal A^s(g)^{\otimes l} \otimes (-s^2 \omega_g^2) \otimes \mathcal A^s(g)^{\otimes (k-l-2)} \right. \cr
& \left. \phantom{mangialatuttalapastasciutta} 
\otimes (-\vartheta_\mathbb{T}\wedge \omega_g )\otimes  {\mathcal A}^s(g)^{\otimes (n-k)}  \ ds\right]\cr
&\quad+{\rm Tr}_n \left[\int_0^1  1 \otimes \sum_{k=1}^n  \sum_{l=0}^{n-k-1} \mathcal A^s(g)^{\otimes (k-1)} \otimes (-\vartheta_\mathbb{T}\wedge \omega_g ) \right. \cr
& \left. \phantom{mangialatuttalapastasciutta} 
\otimes \mathcal A^s(g)^{\otimes l} \otimes (-s^2\omega_g^2 ) \otimes  {\mathcal A}^s(g)^{\otimes (n-k-l-1)}  \ ds\right]\cr 
&\quad-{\rm Tr}_n \left[\int_0^1  1 \otimes \sum_{k=1}^n   \mathcal A^s(g)^{\otimes (k-1)} \otimes (\vartheta_\mathbb{T}\wedge \omega_g^2  +\omega_g) \otimes \mathcal A^s(g)^{\otimes (n-k)}  \ ds\right],
\end{align*}
}
and
{
\begin{align*}
&\left(b\left\langle \mathrm{Ch}^-_{n+1}(g)\right\rangle\right)_n \\
&=-{\rm Tr}_n \left[\int_0^1  1 \otimes \sum_{k=1}^n  \sum_{l=0}^{k-2} \mathcal A^s(g)^{\otimes l} \otimes (+s^2 \omega_g^2) \otimes \mathcal A^s(g)^{\otimes (k-l-2)} \right. \cr
& \left. \phantom{mangialatuttalapastasciutta} 
\otimes (-\vartheta_\mathbb{T}\wedge \omega_g )
\otimes  {\mathcal A}^s(g)^{\otimes (n-k)}  \ ds\right]\cr
&\quad+{\rm Tr}_n \left[\int_0^1  1 \otimes \sum_{k=1}^n  \sum_{l=0}^{n-k-1} \mathcal A^s(g)^{\otimes (k-1)} \otimes (-\vartheta_\mathbb{T}\wedge \omega_g ) \otimes \mathcal A^s(g)^{\otimes l} \right. \cr
& \left. \phantom{mangialatuttalapastasciutta} 
 \otimes (+s^2\omega_g^2 )
\otimes  {\mathcal A}^s(g)^{\otimes (n-k-l-1)}  \ ds\right]\cr 
&\quad-{\rm Tr}_n \left[\int_0^1  1 \otimes \sum_{k=1}^n   \mathcal A^s(g)^{\otimes (k-1)} \otimes (-2 s \vartheta_\mathbb{T}\wedge \omega_g^2 ) \otimes \mathcal A^s(g)^{\otimes (n-k)}  \ ds\right],
\end{align*}
}
whose sum is
\begin{align*}
& {\rm Tr}_n \left[\int_0^1  1 \otimes \sum_{k=1}^n  \mathcal A^s(g)^{\otimes (k-1)} \otimes \left(\frac {d }{ds} \mathcal A^s(g) \right) \otimes  {\mathcal A}^s(g)^{\otimes (n-k)}  \ ds\right]\\
& =
{\rm Tr}_n \left[\int_0^1  \frac {d }{ds} \left( 1 \otimes \mathcal A^s(g)^{\otimes n}\right) \ ds\right] = {\rm Tr}_n \left[ 1 \otimes \mathcal A^1(g)^{\otimes n} \right] -{\rm Tr}_n \left[ 1 \otimes \mathcal A^0(g)^{\otimes n} \right] .
\end{align*}
Thus, we finally have 
\begin{align*}
 (b\mathrm{Ch}^-(g))_n={\rm Tr}_n \left[ 1 \otimes \mathcal \omega_g^{\otimes n} \right], \qquad n=1,2,\ldots.
 \end{align*}
We now prove that 
$$
\left(\dots,{\rm Tr}_n \left[ 1 \otimes \mathcal \omega_g^{\otimes n} \right],\dots \right) \in \mathscr{D}_{\epsilon}(\Omega_{\mathbb{T}}(M\times \mathbb{T})).
$$
To this end we have simply to employ the properties of the generalized trace. Indeed, for
$n\geq 2$ we can write 
\begin{align*}
 \left\langle {\rm Tr}_n \left[ 1 \otimes \mathcal \omega_g^{\otimes n} \right]\right\rangle=&\left\langle {\rm Tr}_n \left[ 1 \otimes \omega_g \otimes \omega_g \otimes \mathcal \omega_g^{\otimes (n-2)} \right]\right\rangle\cr
=&-\left\langle {\rm Tr}_n \left[ 1 \otimes dg^{-1} \otimes d g \otimes \mathcal \omega_g^{\otimes (n-2)} \right]\right\rangle \cr
=&-\left\langle {\rm Tr}_n \left[ 1 \otimes dg^{-1} \otimes d g \otimes \mathcal \omega_g^{\otimes (n-2)}\right]\right\rangle\cr &-\left\langle {\rm Tr}_{n-1} \left[ g^{-1} \otimes d g \otimes \mathcal \omega_g^{\otimes (n-2)}\right] \right\rangle \cr
&+\left\langle {\rm Tr}_{n-1} \left[ 1 \otimes g^{-1} d g \otimes \mathcal \omega_g^{\otimes (n-2)} \right]\right\rangle,
\end{align*}
where the last two terms cancel each other because of the trace property, which is precisely of the form (\ref{titty}) for $f=g^{-1}$. Similarly, for $n=1$ it is sufficient to notice that
\begin{align*}
\left\langle  {\rm Tr}_1 \left[ 1\otimes \omega_g \right]\right\rangle= \left\langle {\rm Tr}_1 \left[ g^{-1}\otimes dg \right]\right\rangle,
\end{align*}
which is of the form (\ref{titty0}) with $f=g^{-1}$, completing the proof of $b\mathrm{Ch}^-(g)\in  \mathscr{D}_{\epsilon}(\Omega_{\mathbb{T}}(M\times \mathbb{T}))$.\\
b) It suffices to prove the following two facts:\\
i) If $g,h\in C^{\infty}(M,U(l\times l;\IC))$, then one has $\mathrm{Ch}^-(g\oplus h)=\mathrm{Ch}^-(g)+\mathrm{Ch}^-(h)$.\\ 
ii) If $g_0,g_1\in C^{\infty}(M,U(l\times l;\IC))$ are connected by a smooth homotopy 
$$
g_{\cdot}\in C^{\infty}(M\times I,U(l\times l;\IC)),
$$
 then one has
$$
\mathrm{Ch}^-(g_1) - \mathrm{Ch}^-(g_0) =(b+B)w\quad\text{ in $\mathscr{N}_{\epsilon}(\Omega_{\mathbb{T}}(M\times \mathbb{T}))$}
$$
for some $w\in  \mathscr{C}_{\epsilon}(\Omega_{\mathbb{T}}(M\times \mathbb{T}))$.\\
Here, property i) is an immediate consequence of the properties of the generalized trace ${\mathrm Tr}_n$ using the block diagonal form of $g\oplus h$.\\
To see ii), for any $t\in I$, we define the embedding
\begin{align*}
 j_t:M \hookrightarrow M\times I, \quad x\longmapsto (x,t),
\end{align*}
and $w=(w_0,w_1,\dots)\in  \mathscr{C}_{\epsilon}(\Omega_{\mathbb{T}}(M\times \mathbb{T}))$ by setting
{\footnotesize
\begin{align*}
w_n : =&
-{\rm Tr}_n \left[ \int_0^1 \int_0^1  1 \otimes \sum_{k=1}^n  \sum_{l=0}^{k-2} j^*_t  \left(\mathcal A^s(g_{\cdot})^{\otimes l} \otimes  \iota_{\partial_I} \mathcal A^s(g_{\cdot}) \otimes \mathcal A^s(g_{\cdot})^{\otimes (k-l-2)} \right.\right.\cr
&\left.\left. \qquad\qquad\qquad\qquad \phantom{\int_0^1 \int_0^1  1 \otimes wwwwwwwwwww} \otimes \mathcal B^s(g_{\cdot}) \otimes  {\mathcal A}^s(g_{\cdot})^{\otimes (n-k)} \right)  \ ds \ dt \right]\cr
&+{\rm Tr}_n \left[\int_0^1  \int_0^1 1 \otimes \sum_{k=1}^n  \sum_{l=0}^{n-k-1} j^*_t \left(\mathcal A^s(g_{\cdot})^{\otimes (k-1)} \otimes \mathcal B^s(g_{\cdot}) \otimes \mathcal A^s(g_{\cdot})^{\otimes l}  \otimes  \iota_{\partial_I} \mathcal A^s(g_{\cdot})
\right. \right. \cr 
&\left.\left. \qquad\qquad\qquad\qquad \phantom{\int_0^1 \int_0^1  1 \otimes wwwwwwwwwww}\otimes  {\mathcal A}^s(g_{\cdot})^{\otimes (n-k-l-1)} \right) \ ds \ dt \right]\cr 
&-{\rm Tr}_n \left[\int_0^1 \int_0^1 1 \otimes \sum_{k=1}^n   j^*_t \left( \mathcal A^s(g_{\cdot})^{\otimes (k-1)} \otimes   \iota_{\partial_I} \mathcal B^s(g_{\cdot}) \otimes \mathcal A^s(g_{\cdot})^{\otimes (n-k)} \right) \ ds \ dt \right].
\end{align*}
}
The $\mathscr{C}_{\epsilon}$ growth conditions are easily checked for $w$. Then again it is clear that $B w \in \mathscr{D}_{\epsilon}(\Omega_{\mathbb{T}}(M\times \mathbb{T}))$. On the other hand, by using the identity
\begin{align*}
 d j^*_t   \iota_{\partial_I} \mathcal A^s(g_{\cdot}) =-j^*_t   \iota_{\partial_I} d \mathcal A^s(g_{\cdot}) +\frac \partial{\partial t}  j^*_t \mathcal A^s(g_{\cdot}),
\end{align*}
and similarly for $\mathcal B^s$, and the same computations as in part a) we get, as elements in the Chen normalized complex,
\begin{align*}
(bw + B w)_n&= (bw)_n = (b \left\langle w_n\right\rangle )_n +(b  \left\langle w_{n+1}\right\rangle )_n= \left(\langle \int_0^1 \frac d{d t}  j^*_t \mathrm{Ch}^-(g_.) \right)_n \cr
&= \mathrm{Ch}^-_n(g_1) - \mathrm{Ch}^-_n(g_0) .
\end{align*}
This completes the proof.
\end{proof}

If $M$ has no boundary (so that $LM$ is a well-defined Fr\'{e}chet manifold), in view of $(d+P) \rho= \rho (b+B)$, we immediately get:

\begin{Corollary}\label{sway} Assume $M$ is a compact manifold without boundary. Then for all $g\in C^{\infty}(M,U(l\times l;\IC))$ one has $(d+P) \rho(\mathrm{Ch}^-(g)) =0$ in $\mathscr{N}_{\epsilon}(\Omega_{\mathbb{T}}(M\times \mathbb{T}))$, in particular, $ \rho(\mathrm{Ch}^-(g))$ induces a homology class in $\widehat{\mathsf{H}}^-_\mathbb{T}(LM)$.
\end{Corollary}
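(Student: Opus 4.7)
The corollary should follow almost formally from assembling results already proved in the excerpt, so the plan is essentially to chase the cycle $\mathrm{Ch}^-(g)$ through the chain of morphisms established in Sections 3 and 4. First I would record the hypotheses that make the statement meaningful: since $M$ is compact and boundaryless, the Fréchet manifold $LM$ exists, so the completed equivariant de Rham complex $(\widehat{\Omega}(LM), d+P)$ and the equivariant Chen integral $\rho:\mathscr{N}_{\epsilon}(\Omega_{\mathbb{T}}(M\times\mathbb{T}))\to\widehat{\Omega}(LM)$ are available.

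Next I would invoke Theorem \ref{main}(a), which gives two things simultaneously: $\mathrm{Ch}^-(g)\in\mathscr{C}^{-}_{\epsilon}(\Omega_{\mathbb{T}}(M\times\mathbb{T}))$ and $(b+B)\mathrm{Ch}^-(g)\in\mathscr{D}_{\epsilon}(\Omega_{\mathbb{T}}(M\times\mathbb{T}))$. Equivalently, the class $[\mathrm{Ch}^-(g)]$ in the Chen-normalized complex $\mathscr{N}^{-}_{\epsilon}(\Omega_{\mathbb{T}}(M\times\mathbb{T}))$ is $(b+B)$-closed.

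Now I would apply the Proposition from Section 4, which shows that $\rho$ descends to a continuous morphism of super complexes
\[
\rho:\mathscr{N}_{\epsilon}(\Omega_{\mathbb{T}}(M\times\mathbb{T}))\longrightarrow \widehat{\Omega}(LM),
\]
intertwining $b+B$ with $d+P$ and preserving the $\IZ/2$-grading. Applying $\rho$ to the identity $(b+B)\mathrm{Ch}^-(g)=0$ in $\mathscr{N}_{\epsilon}$ yields
\[
(d+P)\rho(\mathrm{Ch}^-(g))=\rho((b+B)\mathrm{Ch}^-(g))=0 \quad\text{in } \widehat{\Omega}(LM),
\]
while preservation of the grading places $\rho(\mathrm{Ch}^-(g))\in\widehat{\Omega}^{-}(LM)$. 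This produces the desired homology class in $\widehat{\mathsf{H}}^{-}_{\mathbb{T}}(LM)$.

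Since every nontrivial step, namely the cycle property of $\mathrm{Ch}^-(g)$, the descent of $\rho$ to the Chen-normalized complex, and the intertwining of differentials, has already been carried out in the previous sections, I do not expect any genuine obstacle; the only thing to double-check is that the appeal to $\rho$ on $\mathscr{N}_{\epsilon}$ (as opposed to $\mathscr{N}$) uses the density/continuity extension noted in the Proposition, so that the vanishing of $\rho$ on $\mathscr{D}_{\epsilon}$ really follows from its vanishing on $\mathscr{D}$.
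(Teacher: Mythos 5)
Your proposal is correct and follows exactly the route the paper takes: the paper derives the corollary in one line from Theorem \ref{main}(a) together with the intertwining relation $(d+P)\rho=\rho(b+B)$ and the fact that $\rho$ descends to the Chen-normalized entire complex, which is precisely the chain of implications you assemble. Your extra remark about the density/continuity extension of $\rho$ to $\mathscr{D}_{\epsilon}$ is the right point to check and is covered by the Proposition in Section 4.
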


\begin{Remark}\label{remark even}
There is an even version of $\mathrm{Ch}^-(g)$ given as follows: If $N$ is a manifold and $d+C$ is a connection on a trivial vector bundle over $N$, then with $R_C$ the curvature of the connection $1$-form $C$ one defines
$$
\mathrm{Ch}^+(C)=(\mathrm{Ch}^+_0(C),\mathrm{Ch}^+_1(C),\dots)\in \mathscr{C}^{+}_{\epsilon}(\Omega_{\mathbb{T}}(N\times \mathbb{T}))
$$
by
$$
\mathrm{Ch}^+_n(C):={\rm Tr}_n \left[ 1\otimes (C -\vartheta_\mathbb{T}\wedge R_C )^{\otimes n} \right],
$$
which by an analogous calculation as in the proof of Theorem \ref{main} is seen to satisfy 
$$
(b+B)\mathrm{Ch}^+(C)=0\quad\text{ in $\mathscr{N}_{\epsilon}(\Omega_{\mathbb{T}}(N\times \mathbb{T}))$.}.
$$
Then, there holds an even/odd periodicity, that is, one can obtain $\mathrm{Ch}^-(g)$ from its even variant by a fiber integration: indeed, by varying $s\in I$ in 
$$
A^s(g) \in \Omega_\mathbb{T}(M , \mathrm{Mat}(l\times l;\IC))
$$
we get a form
$$
A(g) \in \Omega_\mathbb{T}(M\times  I , \mathrm{Mat}(l\times l;\IC))
$$
and can consider the fibration
$$
\pi: M\times I\longrightarrow M.
$$
Then, for the connection $d+\tilde{A}_g$ on the trivial vector bundle over $M\times I$, where $\tilde{A}_g:=\pi^* A_g$, one has, using the definitions of $\mathcal A^s(g)$ and $\mathcal B^s(g)$ that
$$
{\rm Ch}^-(g)=\int_I \iota_{\partial_I} {\rm Ch}^+(\tilde{A}_g)=\pi_*{\rm Ch}^+(\tilde{A}_g) ,
$$
the integration along the fibers of $\pi$.
\end{Remark}

The \emph{odd Chern character} $\mathrm{ch}^-(g)\in\Omega^{-}(M)$ is the closed odd differential form defined by
\begin{align}\label{daaqd}
\mathrm{ch}^-(g):= \mathrm{Tr}\left[ \sum_{j=0}^{\infty}\frac{(-1)^jj!}{(2j+1)!} (g ^{-1}  dg)^{\wedge (2j+1)}\right],
\end{align}
and the \emph{odd Bismut-Chern character} is the differential form 
$$
\mathrm{Bch}^{-}(g)=(\mathrm{Bch}^{-}_1(g),\mathrm{Bch}^{-}_3(g),\dots)\in \widehat{\Omega}^{-}(LM)
$$
defined by
{\footnotesize
\begin{align*}
 \mathrm{Bch}^{-}_{2n-1}(g)=&{\rm Tr} \left[   \int_0^1\int_{\{0\leq t_1\leq \dots t_n\leq 1\}} \sum_{j=1}^n  \bigwedge_{i=1}^{j-1} \trasp^{s}_{t_i}(g) R^s_g(t_i)\bigwedge \trasp^{s}_{t_j}(g)  \dot{A}^s_g(t_j) \right. \cr
& \left. \phantom{mangialatuttalapastasciutta} 
 \bigwedge_{l=j+1}^n\trasp^{s }_{t_l }(g) R^s_g(t_l)   \trasp^s_1 (g) dt_1\cdots dt_n ds \right],
\end{align*}
}
where 
$$
\dot{A}^s_g= \frac d{ds} A^s_g  =\omega_g\in \Omega^1(M,\mathrm{Mat}(l\times l;\IC)),
$$
 and where $\trasp^{s}_{\cdot}(g)$ denotes the parallel transport with respect to the connection $d+s\omega_g$ on the trivial vector buncle over $M$.

\begin{Theorem}\label{main2} Assume $M$ is a compact Riemannian manifold, possibly with boundary, and let $g\in C^{\infty}(M,U(l\times l;\IC))$. Then one has $\rho(\mathrm{Ch}^{-}(g))|_M=\mathrm{ch}^{-}(g)$, and if $M$ has no boundary then $\mathrm{Bch}^{-}(g)=\rho(\mathrm{Ch}^{-}(g))$.
\end{Theorem}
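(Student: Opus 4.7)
Both identities will be proved by direct expansion of $\rho(\mathrm{Ch}^-_n(g))$ using the definition of the equivariant Chen integral $\rho$ together with the explicit formulas (\ref{idid}) for $\mathcal{A}^s(g)$ and $\mathcal{B}^s(g)$. Each slot of $\mathrm{Ch}^-_n(g)$ is uniquely of the form $\alpha + \vartheta_{\mathbb{T}}\wedge \beta$ and so contributes $\iota\alpha(t_i) - \beta(t_i)$ to the Chen integrand: an $\mathcal{A}^s$-slot contributes $\iota A^s_g(t_i) + R^s_g(t_i)$ (reading $A^s_g = s\omega_g$ and $R^s_g = -s(1-s)\omega_g^2$ off (\ref{idid})), while the unique $\mathcal{B}^s$-slot contributes $\dot A^s_g(t_k) = \omega_g(t_k)$.

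For the first identity, I restrict to constant loops $M\hookrightarrow LM$. On constant loops $\dot\gamma = 0$, so every contraction $\iota A^s_g(t_i)$ vanishes and $\phi^*_\sigma$ acts trivially. Each $\mathcal{A}^s$-slot then reduces to $-s(1-s)\omega_g^2$ and the $\mathcal{B}^s$-slot to $\omega_g$, independently of the simplex variables. Summing over the $n$ possible positions of the $\mathcal{B}^s$-slot produces a factor $n$ under the cyclic trace, since all resulting matrix products agree once $\omega_g$ is commuted past $2$-forms. Combining $\mathrm{vol}(\Delta_n) = 1/n!$ with the Beta identity $\int_0^1 (s(1-s))^{n-1}\,ds = ((n-1)!)^2/(2n-1)!$ yields
\[
\rho(\mathrm{Ch}^-_n(g))\bigl|_M \;=\; \frac{(-1)^{n-1}(n-1)!}{(2n-1)!}\,\mathrm{Tr}\bigl[\omega_g^{2n-1}\bigr],
\]
which after re-indexing $j = n-1$ and summing over $n\ge 1$ reproduces term by term the series (\ref{daaqd}) defining $\mathrm{ch}^-(g)$.

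For the second identity I work on general loops and identify the iterated integrals of the $\iota A^s_g(t_i)$-factors arising in the product $\prod_{i\ne k}(\iota A^s_g(t_i) + R^s_g(t_i))$ with truncations of the Dyson series of the parallel transport $\trasp^s_\cdot(g)$ associated to $d+s\omega_g$. The plan is, for each fixed $n$ and each position $k$ of the $\mathcal{B}^s$-slot, to expand the product into $2^{n-1}$ summands according to the choice of $\iota A^s_g$ or $R^s_g$ in each slot, and to partition $\Delta_n$ into the subsimplices lying between successive ``$R^s_g$-'' and ``$\dot A^s_g$-positions''. By Chen's theorem on iterated integrals, the $\iota A^s_g$-integrals over each such subsimplex reassemble into a parallel transport factor; resumming over all $n$, all $k$ and all insertion patterns produces precisely the integrand of $\mathrm{Bch}^-_{2n-1}(g)$, and the outer rotation average $\int_0^1 d\sigma\,\phi^*_\sigma$ accounts for the freedom of choosing the basepoint in the cyclic Bismut-Chern formula.

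The main obstacle will be this last combinatorial identification: one has to expand and resum the $2^{n-1}$ binomial terms, partition $\Delta_n$ into subsimplices adapted to each pattern of $R^s_g$ and $\dot A^s_g$ insertions, identify the nested $\iota A^s_g$-integrals with partial Dyson series for $\trasp^s_\cdot(g)$, and manage signs and the rotation average carefully. A potentially cleaner alternative, suggested by Remark \ref{remark even}, is first to establish the even analogue $\rho(\mathrm{Ch}^+(C)) = \mathrm{Bch}^+(d+C)$ (essentially available from \cite{gjp}) and then to deduce the odd case by fibre integration along $I$ of $\mathrm{Ch}^+(\tilde A_g)$, using the transgression relating $\mathrm{Bch}^+$ and $\mathrm{Bch}^-$, provided one checks that $\rho$ commutes with pullback and with fibrewise integration along $\pi: M\times I \to M$.
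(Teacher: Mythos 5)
Your proof of the first identity is complete and correct: on constant loops every contraction $\iota A^s_g(t_i)$ vanishes and $\phi_\sigma^*$ acts trivially, each $\mathcal{A}^s$-slot contributes $R^s_g=-s(1-s)\omega_g^2$ and the $\mathcal{B}^s$-slot contributes $\omega_g$, and the factor $n\cdot\mathrm{vol}(\Delta_n)=1/(n-1)!$ combined with $\int_0^1 (s(1-s))^{n-1}\,ds=((n-1)!)^2/(2n-1)!$ reproduces exactly the coefficients of (\ref{daaqd}). This is the computation that the paper compresses into ``a simple consequence of the definitions''; you supply the details it omits.

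For the second identity, however, what you have written is a strategy rather than a proof. The step you yourself flag as the main obstacle --- expanding $\prod_{i\neq k}(\iota A^s_g(t_i)+R^s_g(t_i))$ into $2^{n-1}$ terms, partitioning $\Delta_n$ into subsimplices between consecutive $R^s_g$- and $\dot A^s_g$-insertions, recognizing the nested $\iota A^s_g$-integrals as partial Dyson series for $\trasp^s_\cdot(g)$, and resumming over $n$, $k$ and insertion patterns with the correct signs --- \emph{is} the content of the assertion $\rho(\mathrm{Ch}^-(g))=\mathrm{Bch}^-(g)$, and you do not carry it out. The paper closes precisely this gap with a cleaner device worth adopting: it introduces truncated quantities $V^s(g,t)$ (the Bismut--Chern integrand over $\{0\le t_1\le\cdots\le t\}$) and $W^s(g,t)$ (the corresponding truncation of the expanded Chen integral), shows that both, together with the auxiliary parallel-transport expansion $X^s(g,t)$, satisfy the same first-order IVP in the endpoint $t$, and concludes $V^s=W^s$ by uniqueness --- the Dyson-series identification you invoke is exactly what this ODE argument encodes, with no explicit combinatorics. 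Two further points on your route need attention. First, the rotation average $\int_0^1 d\sigma\,\phi_\sigma^*$ does not automatically ``account for the freedom of basepoint'': since $\mathrm{Bch}^-(g)$ is defined with a fixed basepoint, you must verify that the trace of holonomy-with-insertions is rotation invariant before discarding the average. Second, your proposed alternative via Remark \ref{remark even} presupposes both the even identity $\rho(\mathrm{Ch}^+(C))=\mathrm{Bch}^+(C)$ and the compatibility of $\rho$ with pullback and fibrewise integration along $M\times I\to M$; neither is established in the paper, so this detour would not actually shorten the argument.
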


Note that in view of Corollary \ref{sway}, Theorem \ref{main2} provides a new proof of 
$$
(d+P)\mathrm{Bch}^{-}(g)=0
$$
We refer the reader to \cite{wilson} for a variant of this result.

\begin{proof}[Proof of Theorem \ref{main2}] The formula $\rho(\mathrm{Ch}^{-}(g))|_M=\mathrm{ch}^{-}(g)$ is a simple consequence of the definitions, once one has noticed the formula
$$
\rho\left\langle (\alpha_0+\vartheta_\mathbb{T}\wedge \beta_0 ) \otimes \dots \otimes(\alpha_{n}+\vartheta_\mathbb{T}\wedge\beta_{n}) \right\rangle|_M= \alpha_0\wedge \cdots \wedge\alpha_n.
$$
In order to see $\mathrm{Bch}^{-}(g)=\rho(g)$, given $t,s\in I$ define 
$$
V^{s}(g,t)\in  \widehat{\Omega}^{-}(LM, \mathrm{Mat}(l\times l;\IC))
$$
by
  \begin{align*}
 V^{s}_{2n+1}(g,t)=    \int_{\{0\leq t_1\leq \dots t_{n+1}\leq t\}} &\sum_{j=1}^{n+1}  \bigwedge_{i=1}^{j-1} \trasp^{s}_{t_i}(g) R^s_g(t_i)\bigwedge \trasp^{s}_{t_j}(g)  \dot{A}^s_g(t_j)\\
& \times \bigwedge_{l=j+1}^{n+1}\trasp^{s }_{t_l }(g) R^s_g(t_l)   \trasp^s_1(g) dt_1\cdots d t_{n+1}  ,
\end{align*}
and the differential form 
$$
W^{s}(g,t)\in   \widehat{\Omega}^{-}(LM, \mathrm{Mat}(l\times l;\IC))
$$
by
\begin{align*}
& W^{s}_{2n+1}(g,t)=    \sum_{k=n+1}^\infty  \ \      \sum_{r,j_1,\cdots,j_{n}=1, \text{pairwise distinct}}^k\\
 &\times  \int_{\{0\leq t_1\leq \dots t_k\leq  t \}}   \iota A^s_g(t_1) \cdots R^s_g (t_{j_1}) \cdots  \dot A^s_g(t_r) \cdots  R^s_g  (t_{j_n})\cdots \iota  A^s_g(t_k) dt_1\cdots dt_k.
 \end{align*}


 Then obviously one has 
$$
 \mathrm{Bch}^{-}(g)= \mathrm{Tr}  \left[ \int^1_0  V^{s}(g,t)|_{t=1} ds\right]
$$
and it is easily checked from the definitions that 
$$
 \rho (\mathrm{Ch}^-(g) )=\mathrm{Tr} \left[ \int^1_0 W^{s}(g,t)|_{t=1} ds\right].
$$

Thus it suffices to show that $W^{s}(g,t)=V^{s}(g,t)$ for all $t,s\in I$. To see this, the essential idea is to consider for every $t,s\in I$ the even form
$$
X^{s}(g,t)=(X^{s}_0(g,t),X^{s}_2(g,t),\dots)\in  \widehat{\Omega}^{+}(LM, \mathrm{Mat}(l\times l;\IC)) 
$$
which is defined by  
\begin{align*}
& X^{s}_{0}(g,t)=\pa^s_{t}(g),\\
& \frac d{dt} X^{s}_{2n}(g,t)=X^{s}_{2n}(g,t)\iota A^s_g(t)+X^{s}_{2n-2}(g,t) R^s_g(t),\\
&X^{s}_{2n}(g,t)|_{t=0}=0\quad\text{ for all $n\geq 1$,}\\
\end{align*}
and the odd form
$$
Y^{s}(g,t)=(Y^{s}_1(g,t),Y^{s}_3(g,t),\dots)\in  \Omega^{-}(LM, \mathrm{Mat}(l\times l;\IC)) 
$$
which is defined by
\begin{align*}
& \frac {d}{dt} Y^{s}_{1}(g,t)=Y^{s}_{1}(g,t)\iota A_g^s(t)+X^{s}_{0}(g,t) \dot{A}^s_g(t),\\
& \frac {d}{dt} Y^{s}_{2n+1}(g,t)=Y^{s}_{2n+1}(g,t)\iota A^s_g(t)+Y^{s}_{2n-1}(g,t) R^s_g(t) + X^{s}_{2n}(g,t) \dot{A}^s_g(t) \quad \forall n\geq 1,\\
&Y^{s}_{2n+1}(g,t)|_{t=0}=0\quad\text{ for all $n$}.
\end{align*}
Noting that the sum that defines $W^{s}_{2n+1}(g,t)$ converges uniformly in $t$ so that one can interchange $d/dt$ with $\sum^{\infty}_{k=n+1}$, it is now easily checked that both $t\mapsto W^s(g,t)$ and $t\mapsto V^s(g,t)$ solve the IVP's which define $Y^{s}(g,t)$, so that 
$$
V^{s} (g,t)=W^{s} (g,t)=Y^{s}(g,t)\quad\text{ for all $t,s\in I$},
$$
as was claimed.

\end{proof}
 
\begin{Remark}
If $N$ is a compact manifold without boundary and given a connection $d+C$ over a trivial vector bundle over $N$, the \emph{even Bismut-Chern character} is the differential form 
$$
\mathrm{Bch}^{+}(C)=(\mathrm{Bch}^{+}_0(C),\mathrm{Bch}^{+}_2(C),\dots)\in \widehat{\Omega}^{+}(LN)
$$
defined by
\begin{align*}
 \mathrm{Bch}^{+}_{2n}(C)=&{\rm Tr} \left[ \int_{\{0\leq t_1\leq \dots t_n\leq 1\}} \bigwedge_{i=1}^{n} \trasp^C_{t_i} R_{C}(t_i)   \trasp^C_1 dt_1\cdots dt_n   \right],
\end{align*}
where $R_C$ is again the curvature of $d+C$ and $\trasp^C_{\cdot}$ is the parallel transport with respect to $d+C$. Then one has another even/odd periodicity as in Remark \ref{remark even}: we can consider $A^s_g$ as defining a connection 
$1$-form $\tilde{A}_g$ over a trivial vector bundle over $M\times I$. However, since $M\times I$ is a manifold with boundary, it is convenient to embed it in a larger manifold, say 
$$
\chi: M\times I \hookrightarrow M\times J
$$ 
where $J=(-1,2)$. Therefore, we extend $A^s_g$ to $s\in J$, consider it as defining a connection $1$-form $\tilde{A}_g$ over a trivial vector bundle over $M\times J$.\\
The corresponding curvature 
$$
R_{\tilde{A}_g}\in  \Omega^2(M\times J,\mathrm{Mat}(l\times l;\IC))
$$
is given by varying $s\in J$ in 
$$
R^s_g+ds \wedge   \dot A^s_g \in \Omega^2(M,\mathrm{Mat}(l\times l;\IC)).
$$
Since $\iota_{\partial J} R_{\tilde{A}_g}=\dot A^s_g$, after restricting to loops fibering over $J$, we immediately get that under integration along the fibers of  
$$
\pi:M\times I\longrightarrow M,
$$
one has
$$
\mathrm{Bch}^{-}_{2n-1}(g)= \int_I \chi^* \iota_{\partial_J} \mathrm{Bch}^{+}_{2n}(\tilde{A}_g) =\pi_* \chi^*\mathrm{Bch}^{+}_{2n}(\tilde{A}_g) .
$$
\end{Remark}


\end{document}